\newcommand{\BA}{{\mathbb{A}}}
\DeclareMathOperator{\con}{{con}}
\DeclareMathOperator{\Tate}{{Tate}}
\DeclareMathOperator{\perf}{{perf}}
\newcommand{\HHom}{\underline{\on{Hom}}}
\DeclareMathOperator{\cris}{{cris}}
\newcommand{\Good}{{\mathcal Good}}
\newcommand{\cA}{{\mathcal A}}
\newcommand{\cO}{{\mathcal O}}
\newcommand{\fD}{{\mathfrak D}}
\newcommand{\nc}{\newcommand}
\nc\wh{\widehat}
\nc\on{\operatorname}
\nc\Gr{\on{Gr}}
\nc\Fl{\on{Fl}}
\newtheorem{cor}[subsubsection]{Corollary}
\newtheorem{lem}[subsubsection]{Lemma}
\newtheorem{prop}[subsubsection]{Proposition}
\newtheorem{thm}[subsubsection]{Theorem}
\theoremstyle{remark}
\newtheorem{rem}[subsubsection]{Remark}
\newcommand{\BF}{{\mathbb{F}}}
\newcommand{\BN}{{\mathbb{N}}}
\newcommand{\BQ}{{\mathbb{Q}}}
\newcommand{\BR}{{\mathbb{R}}}
\newcommand{\BZ}{{\mathbb{Z}}}
 \DeclareMathOperator{\Spf}{{Spf}}
\newcommand{\limto}{{\displaystyle\lim_{\longrightarrow}}}
\newcommand{\rightlim}{\mathop{\limto}}
\newcommand{\leftlim}{\mathop{\displaystyle\lim_{\longleftarrow}}}
\newcommand{\limfromn}{\leftlim\limits_{\raise3pt\hbox{$n$}}}
\newcommand{\limton}{\rightlim\limits_{\raise3pt\hbox{$n$}}}
\newcommand{\rightlimit}[1]{\mathop{\lim\limits_{\longrightarrow}}\limits%
                    _{\raise3pt\hbox{$\scriptstyle #1$}}}
\newcommand{\leftlimit}[1]{\mathop{\lim\limits_{\longleftarrow}}\limits%
                    _{\raise3pt\hbox{$\scriptstyle #1$}}}
\newcommand{\epi}{\twoheadrightarrow}
\newcommand{\iso}{\buildrel{\sim}\over{\longrightarrow}}
\newcommand{\mono}{\hookrightarrow}
\DeclareMathOperator{\Coker}{{Coker}}
\DeclareMathOperator{\End}{{End}}
\DeclareMathOperator{\gr}{{gr}} 
\DeclareMathOperator{\Hom}{{Hom}}
\DeclareMathOperator{\Ker}{{Ker}} 
\DeclareMathOperator{\im}{{Im}}
 \DeclareMathOperator{\op}{{op}}
\DeclareMathOperator{\Spec}{{Spec}}
\DeclareMathOperator{\Gal}{{Gal}}
\theoremstyle{definition}
\newtheorem{defin}[subsubsection]{Definition}
\numberwithin{equation}{section}
\newcommand{\Fr}{\operatorname{Fr}}
\begin{document}
\title[On a theorem of Scholze-Weinstein]{On a theorem of Scholze-Weinstein}
\author{Vladimir Drinfeld}
\address{University of Chicago, Department of Mathematics, Chicago, IL 60637}
\dedicatory{Dedicated to the memory of Galim Mustafin}

\begin{abstract}
Let $G$ be the Tate module of a $p$-divisble group $H$ over a perfect field $k$ of characteristic $p$. A theorem of Scholze-Weinstein describes $G$ (and therefore $H$ itself) in terms of  the Dieudonn\'e module of~$H$; more precisely, it describes $G(C)$ for ``good" semiperfect $k$-algebras $C$ (which is enough to reconstruct $G$). 

In these notes we give a self-contained proof of this theorem and explain the relation with the classical descriptions of the Dieudonn\'e functor from Dieudonn\'e modules to $p$-divisible groups.
\end{abstract}

\keywords{$p$-divisible group, Dieudonn\'e equivalence, semiperfect ring}
\subjclass[2010]{Primary 14L05}

\maketitle

\section{Introduction}   \label{s:intro}
Fix a prime $p$. Recall that an $\BF_p$-algebra is said to be \emph{perfect} (resp.~\emph{semiperfect}) if the Frobenius homomorphism $\Fr:C\to C$ is bijective (resp.~surjective).

The goal of these notes is to give a self-contained proof of Theorem~\ref{t:SW}. This theorem is due to P.~Scholze and J.~Weinstein (it is a very special case of the theory developed in \S 4 of~\cite{SW}). If $H$ is a $p$-divisble group over a perfect field $k$ of characteristic $p$ and $G$ is its Tate module, the theorem describes $G$ (and therefore $H$ itself) in terms of  the Dieudonn\'e module of~$H$; more precisely, it describes $G(C)$ for ``good" semiperfect $k$-algebras $C$ (which is enough to reconstruct $G$). The description is in terms of Fontaine's ring $A_{\cris}(C)$.

\S\ref{ss:N pronilpotent} and \S\ref{ss:conceptually better} are influenced by Fontaine's book \cite{F77}; they explain the relation between Theorem~\ref{t:SW} and the classical descriptions of the Dieudonn\'e equivalence \cite{Dem,F77}. The idea of \S\ref{ss:conceptually better} is to switch from $A_{\cris}(C)$ to a more manageable $W(k)$-module $\overline{M} (C)$, which was introduced by Fontaine \cite{F77} (under a different name); the definition of $\overline{M} (C)$ is given in \S\ref{sss:bar M(C)}. Probably this idea is somewhat similar to \cite[\S 4.2]{FF}. Our \S\ref{ss:N pronilpotent} is a ``baby version" of \S\ref{ss:conceptually better}; instead of $\overline{M} (C)$ we work there with a certain submodule $M(C)\subset\overline{M} (C)$.

 \S\ref{ss:thekeyexample} and formula \eqref{e:explicit c} reflect some exercises, which I had to do in order to under\-stand~\cite{SW}.

\medskip

I thank P.~Scholze and J.~Weinstein for valuable advice and references.
The author's research was partially supported by NSF grant DMS-1303100.

\section{Recollections on Fontaine's functor $A_{\cris}$}   \label{s:Fontaine}
In \S\ref{ss:Fontaine's def}-\ref{ss:F/p} we follow \cite[\S 2.2]{F94} and \cite[\S 4]{SW}, but the proof of the important Proposition~\ref{p:SW on F/p} is different from the one given in \cite{SW}. The material of \S\ref{ss:M(C)} is influenced by Fontaine's book \cite{F77}; it is used in the proof of Proposition~\ref{p:N pronilpotent}. 

We will use the following notation: we write $W(R)$ for the ring of $p$-typical Witt vectors of a ring $R$, and 
for $a\in R$ the Teichm\"uller element $(a,0,0,\ldots )\in W(R)$ is denoted by $[a]$.

\subsection{The definition of $A_{\cris}$}   \label{ss:Fontaine's def}

By the \emph{Fontainization} of an $\BF_p$-algebra $C$ we mean the perfect topological $\BF_p$-algebra
\[
C^{\flat}:=\underset{\longleftarrow}\lim (\ldots\overset{\Fr}\longrightarrow C\overset{\Fr}\longrightarrow C\overset{\Fr}\longrightarrow C).
\]
Thus an element of $C^{\flat}$ is a sequence $(c_0,c_1,\ldots )$ of elements of $C$ such that $c_{n+1}^p=c_n$ for all~$n$. Define $\nu_n :C^{\flat}\to C$ by $\nu (c_0,c_1,\ldots ):=c_n\,$; then $\nu_n=\nu\circ\Fr_{C^\flat}^{-n}$.

Now suppose that $C$ is semiperfect. Then the homomorphisms $\nu_n :C^{\flat}\to C$ are surjective. Fontaine defined $A_{\cris}(C)$ to be the $p$-adic completion of the PD hull\footnote{The definitions of PD thickening and PD hull include the condition $\gamma_m(p)=p^m/m!$.} of the epimorphism $W(C^{\flat})\epi C$ induced by $\nu_0:C^{\flat}\epi C$. 
Despite the fact that the definitions of $C^{\flat}$ and $W$ involve projective limits, one has the following

\begin{prop}   \label{p:finitistic}
For any $n\in\BN$, the functor $C\mapsto A_{\cris} (C)/p^nA_{\cris} (C)$ commutes with filtered inductive limits. 
\end{prop}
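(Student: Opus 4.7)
The plan is to reduce to a computation on the pre-completion PD hull, and then to address the obstacle that the tilting functor does not commute with filtered colimits. By definition $A_\cris(C)$ is the $p$-adic completion of the PD hull $D(C)$ of $W(C^\flat)\twoheadrightarrow C$; since $p$-adic completion is trivial modulo $p^n$, one has $A_\cris(C)/p^n = D(C)/p^n$, and it suffices to prove the analogue for $D(-)/p^n$. Moreover, since $W(C^\flat)$ is $p$-torsion-free (because $C^\flat$ is reduced), $D(C)$ can be realized as the subring of $W(C^\flat)[1/p]$ generated by $W(C^\flat)$ together with all $\xi^m/m!$ for $\xi \in I:=\ker(W(C^\flat)\to C)$ and $m\geq 1$. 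This gives $D(C)/p^n$ an explicit presentation by $W_n(C^\flat)$-generators together with symbols $\gamma_m(\xi)$ modulo the PD relations.

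The principal obstacle is that $C \mapsto C^\flat = \varprojlim_{\Fr} C$ does \emph{not} commute with filtered colimits: a coherent $p$-power root tower in $C = \varinjlim_i C_i$ need not come from any single $C_i$. Hence neither $W(C^\flat)$ nor $W_n(C^\flat)$ does, and the proposition is not formal. The heart of the argument must therefore show that the PD structure absorbs this excess data modulo $p^n$. I would approach this via the following key observation: for any two lifts $c, c' \in C^\flat$ of the same element $c_0 \in C$, the difference $[c]-[c']$ lies in $I$, and the PD-additivity identity
\[
\gamma_m([c]) = \sum_{i+j=m} \gamma_i([c'])\,\gamma_j([c]-[c'])
\]
rewrites any PD-monomial involving $[c]$ in terms of one involving $[c']$ plus divided powers of $[c]-[c']$. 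Expanding $[c]-[c']$ in Witt coordinates shows that it lies in the PD ideal generated by $p$ and by Teichm\"uller lifts of elements of $C$ itself. Iterating this rewriting yields a presentation of $D(C)/p^n$ whose generators and relations are indexed canonically by finite subsets of $C$, from which filtered-colimit commutation is immediate.

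The hard part will be making this rewriting rigorous: in particular, verifying that no new relations appear beyond those traceable to finite subalgebras of $C$, and that for each fixed $n$ only finitely many $\gamma_m$'s contribute once one accounts for the $p$-adic valuation of $m!$, so that the resulting presentation is genuinely finitary. Once this is established, writing $C = \varinjlim_i C_i$ as a filtered colimit, one sees that every generator and relation for $D(C)/p^n$ already appears over some $D(C_i)/p^n$, whence $D(C)/p^n = \varinjlim_i D(C_i)/p^n$ and the proposition follows.
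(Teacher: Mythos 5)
Your reduction to the uncompleted PD hull modulo $p^n$ and your diagnosis of the obstacle (that $C\mapsto C^\flat$ does not commute with filtered colimits) are fine, but the proposal has a genuine gap: the step that would actually overcome the obstacle is precisely the part you defer (``making this rewriting rigorous''), and as sketched it is not in a form that can be completed. The PD-additivity identity only compares divided powers of two Teichm\"uller lifts of the same element; it does not by itself yield a presentation of $D(C)/p^n$ indexed by finite subsets of $C$, and the intermediate claim that $[c]-[c']$ lies in the PD ideal generated by $p$ and ``Teichm\"uller lifts of elements of $C$'' is not correct as stated: one only knows $[c]-[c']\in\Ker\bigl(W(C^\flat)\to W(C)\bigr)$, which is generated by elements $V^i[u]$ with $u\in\Ker\nu_0$, and $V^i[u]=p^i[u^{p^{-i}}]$ with $u^{p^{-i}}$ in general \emph{not} in $\Ker\nu_0$. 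Moreover, your description of $D(C)$ as the subring of $W(C^\flat)[1/p]$ generated by the $\xi^m/m!$ presupposes that the PD hull is $p$-torsion-free, which is neither known for an arbitrary semiperfect $C$ nor needed.

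What is missing is the one computation that makes the statement finitary. For $u\in\Ker\nu_0$ one has, in $A_{\cris}(C)/p^nA_{\cris}(C)$,
\[
V^i[u^{p^n}]=p^i[u^{p^{n-i}}]=p^i\,(p^{n-i})!\,\gamma_{p^{n-i}}([u])=0\qquad(0\le i<n),
\]
since $p^i(p^{n-i})!$ is divisible by $p^{i+(n-i)}=p^n$. Because $\Ker\bigl(W_n(C^\flat)\to W_n(C^\flat/\Ker\nu_n)\bigr)$ is generated by exactly such elements, the map $W(C^\flat)\to A_{\cris}(C)/p^nA_{\cris}(C)$ factors through $W_n(C^\flat/\Ker\nu_n)$, and $A_{\cris}(C)/p^nA_{\cris}(C)$ is then identified with the PD hull of the induced surjection $W_n(C^\flat/\Ker\nu_n)\epi C$. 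Since $\nu_n$ induces an isomorphism $C^\flat/\Ker\nu_n\iso C$, the functor $C\mapsto W_n(C^\flat/\Ker\nu_n)$ visibly commutes with filtered inductive limits (it is $W_n(C)$ with a Frobenius-twisted augmentation), and PD hulls and reduction mod $p^n$ commute with filtered colimits; the proposition follows. Your remark about the $p$-adic valuation of $m!$ is in the right spirit, but without this factorization through finite-length Witt vectors of $C$ itself, the filtered-colimit statement does not follow from your sketch.
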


\begin{proof}
The canonical homomorphism $f:W(C^\flat )\to A_{\cris} (C)/p^nA_{\cris} (C)$ factors through $W_n(C^\flat )$. Moreover, if $u\in\Ker (C^\flat\overset{\nu_0}\epi C)$ then
\[
f(V^i[u^{p^n}])=f(p^i[u^{p^{n-i}}])=p^i\cdot (p^{n-i})!\cdot\gamma_{p^{n-i}}([u])=0\quad \mbox{ for }0\le i<n. 
\]
Therefore $f$ factors through $W_n(C^\flat/\Ker\nu_n)$. So $A_{\cris} (C)/p^nA_{\cris} (C)$ is the PD hull of the epimorphism
\begin{equation}  \label{e:economic epi}
W_n(C^\flat/\Ker\nu_n)\epi C
\end{equation}
induced by $\nu_0: C^\flat\epi C$. The functor $C\mapsto W_n(C^\flat/\Ker\nu_n)$ commutes with filtered inductive limits.
\end{proof}

\begin{rem}
The isomorphism $C^\flat/\Ker\nu_n\iso C$ induced by $\nu_n: C^\flat\epi C$ transforms the map \eqref{e:economic epi} into the composed map
$W_n(C)\epi C\overset{\Fr^n}\epi C$.
\end{rem}

\subsubsection{The canonical epimorphism $A_{\cris} (C)\epi W(C)$}  \label{sss:beta}
The ideal $\Ker (W(C)\epi C)$ has a canonical PD structure, namely $\gamma_n (Va):=\frac{p^{n-1}}{n!}\cdot V(a^n)$ for $a\in W(C)$. So the canonical epimorphism $W(C^\flat )\epi W(C)$ uniquely extends to a PD homomorphism 
\[
\beta :A_{\cris} (C)\epi W(C).
\]

\subsection{The universal property}
Let $C$ be a semiperfect $\BF_p$-algebra and $n\in\BN$. Let $\cA_n$ be the category of PD thickenings $\tilde C\epi C$ such that $p^n=0$ in $\tilde C$. 

\begin{prop}   \label{p:Fontaine universality}
$A_{\cris} (C)/p^nA_{\cris} (C)$ is an initial object of $\cA_n\,$.
\end{prop}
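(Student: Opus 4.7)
The plan is to use \propref{p:finitistic} together with the subsequent remark to identify $A_{\cris}(C)/p^n A_{\cris}(C)$ with the PD hull of the surjection $W_n(C) \epi C$, $(c_0,\ldots,c_{n-1}) \mapsto c_0^{p^n}$. By the universal property of PD hulls, a morphism in $\cA_n$ from $A_{\cris}(C)/p^n$ to $\tilde C$ corresponds bijectively to a ring homomorphism $\phi\colon W_n(C) \to \tilde C$ whose composition with $\tilde C \epi C$ equals $(c_0,\ldots,c_{n-1}) \mapsto c_0^{p^n}$ (the compatibility with the PD structure on $\tilde I$ is automatic, as $\tilde I$ carries its given PD structure). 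So it suffices to construct a unique such $\phi$.

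For existence I would use the $n$-th Witt ghost polynomial, a ring homomorphism $w_n\colon W_{n+1}(\tilde C) \to \tilde C$, $(a_0,\ldots,a_n) \mapsto \sum_{i=0}^{n} p^i a_i^{p^{n-i}}$. Since $p^n = 0$ in $\tilde C$ the last term vanishes, so $w_n$ descends through the truncation $W_{n+1}(\tilde C) \epi W_n(\tilde C)$ to a ring homomorphism $\bar w_n$ given by the same formula with $i$ running only up to $n-1$. I set $\phi(c_0,\ldots,c_{n-1}) := \bar w_n(\tilde c_0,\ldots,\tilde c_{n-1})$ for arbitrary lifts $\tilde c_i \in \tilde C$ of $c_i$. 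The key well-definedness computation is that for $\delta \in \tilde I$ and $1 \le k \le p^{n-i}$ one has
\[
\binom{p^{n-i}}{k}\delta^{k} = \frac{(p^{n-i})!}{(p^{n-i}-k)!}\gamma_k(\delta) \in p^{n-i}\tilde C
\]
by Legendre's formula, so $p^i(\tilde c_i+\delta)^{p^{n-i}} - p^i\tilde c_i^{p^{n-i}} \in p^n\tilde C = 0$. Given well-definedness, $\phi$ is automatically a ring homomorphism: the Witt product $\tilde x\cdot\tilde y$ in $W_n(\tilde C)$ is a componentwise lift of $xy \in W_n(C)$, so $\phi(xy) = \bar w_n(\tilde x\tilde y) = \bar w_n(\tilde x)\bar w_n(\tilde y) = \phi(x)\phi(y)$, and analogously for addition; the reduction modulo $\tilde I$ is $c_0^{p^n}$ since all higher terms contain a factor of $p$.

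For uniqueness I would exploit the perfection of $C^\flat$. Any such $\phi$ pulls back along the surjection $W(C^\flat) \epi W_n(C^\flat/\Ker\nu_n) \iso W_n(C)$ to a ring map $f\colon W(C^\flat) \to \tilde C$ whose composition with $\tilde C\epi C$ equals $W(C^\flat) \to C^\flat \xrightarrow{\nu_0} C$. Because $C^\flat$ is perfect, $W(C^\flat)$ is $p$-adically complete and every element has a unique Teichm\"uller expansion $\sum_{i\ge 0} p^i[a_i]$ with $a_i \in C^\flat$; since $p^n = 0$ in $\tilde C$, the resulting sum in $\tilde C$ terminates, and $f$ is determined by its values $f([a])$ for $a \in C^\flat$. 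The identity $[a] = [a^{1/p^n}]^{p^n}$ in $W(C^\flat)$ forces $f([a]) = f([a^{1/p^n}])^{p^n}$, which is the $p^n$-th power of the lift $f([a^{1/p^n}]) \in \tilde C$ of $\nu_0(a^{1/p^n}) = a_n$; by the same estimate, this $p^n$-th power is independent of the choice of lift, so $f([a])$, and hence $\phi$, is uniquely determined.

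The main obstacle is the well-definedness estimate, which fuses the PD structure on $\tilde I$ with the hypothesis $p^n = 0$ in $\tilde C$: it is simultaneously what makes $\phi$ into a well-defined ring homomorphism and what forces the value of any such $\phi$ on Teichm\"uller lifts.
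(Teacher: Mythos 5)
Your proof is correct and takes essentially the same route as the paper: both reduce, via the universal property of PD hulls, to producing a unique ring homomorphism from a Witt ring to $\tilde C$ compatible with the augmentation, construct it by the ghost map $\sum_i p^i a_i^{p^{n-i}}$ (which is well defined because $p^n=0$ and the PD structure on $\tilde I$ gives $(a+\delta)^{p^{n-i}}\equiv a^{p^{n-i}} \bmod p^{n-i}$), and prove uniqueness via Teichm\"uller expansions in $W(C^\flat)$. The only cosmetic differences are that you route the reduction through Proposition~\ref{p:finitistic} and $W_n(C)$ rather than the defining presentation $W(C^\flat)\epi C$, and you check the key congruence by a direct binomial-coefficient computation instead of Lemma~\ref{l:Fontaine universality}(i)--(ii).
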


The proof uses the following

\begin{lem}   \label{l:Fontaine universality}
Let $\pi :\tilde C\epi C$ be an object of $\cA_n$ and $I:=\Ker\pi$. Then

(i) for every $x\in I$ one has $x^p\in pI$;

(ii) if $a\in\tilde C$ and $x\in I$ then $(a+x)^{p^j}-a^{p^j}\in p^jI$ for all $j$; in particular, $(a+x)^{p^n}=a^{p^n}$;

(iii) the ring homomorphism $w_n: W_n(\tilde C)\to\tilde C$ defined by $w_n(a_0,\ldots , a_n):=\sum\limits_{j=0}^n p^ja_j^{p^{n-i}}$ factors as
\begin{equation}  \label{e: bar w_n}
W_n(\tilde C)\epi W_n(C)\overset{\bar w_n}\longrightarrow\tilde C.
\end{equation}
\end{lem}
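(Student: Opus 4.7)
The plan is to establish the three parts in sequence, each built on its predecessor. For (i), I invoke the PD structure on $I$ directly: for $x\in I$ the divided power $\gamma_p(x)$ lies in $I$ and satisfies $p!\,\gamma_p(x)=x^p$; writing $p!=p\cdot(p-1)!$ gives
\[
x^p=p\cdot\bigl((p-1)!\,\gamma_p(x)\bigr)\in pI.
\]

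For (ii), I would induct on $j$. The case $j=0$ is the tautology $(a+x)-a=x\in I$. For the inductive step, write $(a+x)^{p^j}=a^{p^j}+p^jz$ with $z\in I$ and raise both sides to the $p$-th power. The binomial expansion of $(a^{p^j}+p^jz)^p$ yields, beyond the leading term $a^{p^{j+1}}$, the summands
\[
\sum_{k=1}^{p}\binom{p}{k}a^{p^j(p-k)}\,p^{jk}\,z^k.
\]
The $k=1$ summand equals $p^{j+1}a^{p^j(p-1)}z\in p^{j+1}I$. For $2\le k\le p-1$ (an empty range when $p=2$), the factor $\binom{p}{k}$ is divisible by $p$, so the summand lies in $p^{1+jk}I\subset p^{j+1}I$. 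The top term $p^{jp}z^p$ lies in $p^{jp+1}I\subset p^{j+1}I$ by (i). Specializing to $j=n$ and using $p^n=0$ in $\tilde C$ gives the stated identity $(a+x)^{p^n}=a^{p^n}$.

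For (iii), the set-theoretic kernel of the componentwise surjection $W_n(\pi):W_n(\tilde C)\epi W_n(C)$ consists of Witt vectors with every entry in $I$, so it suffices to verify that $w_n$ is constant on the fibers of $W_n(\pi)$. For $a_i\in\tilde C$ and $x_i\in I$ I compute
\[
w_n(a_0+x_0,\ldots,a_n+x_n)-w_n(a_0,\ldots,a_n)=\sum_{i=0}^{n}p^i\bigl((a_i+x_i)^{p^{n-i}}-a_i^{p^{n-i}}\bigr),
\]
and by (ii) each summand lies in $p^i\cdot p^{n-i}I=p^nI=0$. Hence $w_n$ factors through $W_n(C)$, and the resulting $\bar w_n$ is automatically a ring map because $W_n(\pi)$ is a surjective ring map.

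The entire argument is routine PD calculus and presents no genuine obstacle. The one place demanding a little attention is the bookkeeping in the inductive step of (ii): one must verify that the factor of $p$ supplied by $\binom{p}{k}$ for $1\le k<p$, combined with $p^{jk}$, and separately the factor of $p$ coming from (i) in the $k=p$ case, together guarantee divisibility by $p^{j+1}$ in every summand — an inequality that collapses to $j(k-1)\ge 0$ and so holds automatically.
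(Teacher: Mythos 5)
Your argument is correct and follows exactly the route the paper intends: (i) from the PD structure via $x^p=p\cdot\bigl((p-1)!\,\gamma_p(x)\bigr)$, (ii) by induction on $j$ using (i) for the top binomial term, and (iii) by checking $w_n$ is constant on the fibers of $W_n(\pi)$ using (ii); the paper merely compresses this to ``(ii) and (iii) follow from (i).'' Your filled-in details, including the divisibility bookkeeping $1+jk\ge j+1$ and the $k=p$ case handled by (i), are all sound.
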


\begin{proof}
Since $I$ has a PD structure we have (i). Statements (ii) and (iii) follow from (i).
\end{proof}

\begin{proof}[Proof of Proposition~\ref{p:Fontaine universality}]
It is clear that $A_{\cris} (C)/p^nA_{\cris} (C)$ is an object of $\cA_n\,$. To prove that this object is initial, one has to show that if
$\pi :\tilde C\epi C$ is an object of $\cA_n$ then there is a unique homomorphism $f:W(C^\flat )\to\tilde C$ such that the following diagram commutes:
\[
\CD
W(C^\flat )  @>f>> \tilde C   \\
@VVV    @VV\pi V   \\
C^\flat   @>\nu>> C
\endCD
\]

Define $f$ to be the composition
\[
W(C^\flat )\overset{\alpha_n}\longrightarrow W(C)\overset{\bar w_n}\longrightarrow\tilde C,
\]
where $\alpha_n$ is induced by $\nu_0\circ\Fr^{-n}: C^\flat\to C$ and $\bar w_n$ is as in \eqref{e: bar w_n}. Then $f$ has the required property.

Now let $f:W(C^\flat )\to\tilde C$ be any homomorphism with the required property.
If $u\in C^\flat$ and $[u]\in W (C^\flat )$ is the corresponding Teichm\"uller element then
Lemma~\ref{l:Fontaine universality}(ii) implies that  
$$f([u]))=f([u^{p^{-n}}])^{p^n}=a^{p^n},$$ where $a$ is any element of $\tilde C$ such that $\pi (a)=\nu_0 (u^{p^{-n}})$. 
Since $$\Ker (W (\tilde C))\to W_n(\tilde C))=p^n W (\tilde C)\subset\Ker f$$ and $W_n(\tilde C)$ is generated by the Teichm\"uller elements, we see that $f$ is unique.
\end{proof}

\subsection{Commutation of $A_{\cris}$ with a certain type of base change}

\begin{lem}  \label{l:W(quotient)}
Let $B$ be a semiperfect $\BF_p$-algebra. Let $b_j,b_j'\in B$, where $j$ runs through some set $J$. Let $I\subset B$ be
the ideal generated by the elements $b_j-b'_j$, $j\in J$. Then for every $n\in\BN$ the ideal 
$\Ker (W_n(B)\epi W_n(B/I))$ is generated by $V^m ([b_j]-[b'_j])$, where $j\in J$, $0\le m<n$.
\end{lem}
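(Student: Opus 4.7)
The plan is to prove the lemma by induction on $n$, denoting by $N_n\subseteq W_n(B)$ the ideal generated by the prescribed elements and by $K_n:=\Ker(W_n(B)\epi W_n(B/I))$; the inclusion $N_n\subseteq K_n$ is immediate. The base case $n=1$ reduces to the definition of $I$, since $W_1(B)=B$ and $[b_j]-[b'_j]=b_j-b'_j$.

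For the inductive step, use the truncation homomorphism $r:W_{n+1}(B)\epi W_n(B)$, whose kernel is the additive subgroup $V^n(B)=\{(0,\ldots,0,c):c\in B\}$. Given $x\in K_{n+1}$, the inductive hypothesis applied to $r(x)$ yields an expression $r(x)=\sum_i c_iV^{m_i}([b_{j_i}]-[b'_{j_i}])$ in $W_n(B)$ with $m_i<n$. Lifting each $c_i$ arbitrarily to $\tilde c_i\in W_{n+1}(B)$ and setting $y:=\sum_i\tilde c_iV^{m_i}([b_{j_i}]-[b'_{j_i}])\in N_{n+1}$, the fact that $r$ is a ring homomorphism compatible with both Verschiebungs for $m<n$ forces $r(y)=r(x)$. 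Hence $x-y=V^n(c)$ for some $c\in B$, and inspection of components shows $c\in I$. The inductive step therefore reduces to proving $V^n(c)\in N_{n+1}$ for every $c\in I$.

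Writing $c=\sum_k s_k(b_{j_k}-b'_{j_k})$ with $s_k\in B$ and using additivity of $V^n$, it suffices to show that $V^n(s(b_j-b'_j))\in N_{n+1}$ for arbitrary $s\in B$ and $j\in J$. Here is the point at which the hypothesis that $B$ is semiperfect becomes essential: since Frobenius is surjective on $B$, choose $t\in B$ with $t^{p^n}=s$. The Witt identity $a\cdot V^n(b)=V^n(F^n(a)\cdot b)$, applied with $a=[t]$ and $b=[b_j-b'_j]$ in $W_{n+1}(B)$, combined with $F^n([t])=[t^{p^n}]$ and the multiplicativity of the Teichm\"uller lift, yields
\[
V^n\bigl(s(b_j-b'_j)\bigr)=[t]\cdot V^n(b_j-b'_j).
\]
Since $V^n$ in $W_{n+1}(B)$ depends only on the leading Witt component, $V^n(b_j-b'_j)=V^n([b_j]-[b'_j])$, and the right-hand side is $[t]$ times a generator of $N_{n+1}$.

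The main obstacle is this final step: extracting an arbitrary scalar $s\in B$ from inside the Verschiebung $V^n$ as a Teichm\"uller lift. This is possible precisely because $B$ is semiperfect and therefore admits $p^n$-th roots; everything else is standard bookkeeping with the short exact sequence $0\to B\xrightarrow{V^n}W_{n+1}(B)\xrightarrow{r}W_n(B)\to 0$.
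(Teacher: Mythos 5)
Your proof is correct and follows essentially the same route as the paper: induction on the length of the Witt vectors, reduction to the kernel of $W_{n+1}(B)\to W_n(B)\times W_{n+1}(B/I)$, and then the identity $x\cdot V^n y=V^n(F^n x\cdot y)$ together with semiperfectness (extracting $p^n$-th roots) to absorb an arbitrary scalar into the Verschiebung. The paper states this in two lines; you have merely written out the same bookkeeping explicitly.
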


\begin{proof}
By induction, it suffices to check that the ideal $\Ker (W_n(B)\to W_{n-1}(B)\times W_n(B/I))$ is generated by $V^{n-1}([b_j]-[b'_j])$, where $j\in J$. This follows from semiperfectness and the identity $x\cdot V^{n-1} y=V^{n-1}( F^{n-1} x\cdot y )$ in $W(B)$.
\end{proof}

\begin{lem}   \label{l:W(tensor product)}
Let $C$ be an $\BF_p$-algebra. Let $C_1$ and $C_2$ be semiperfect $C$-algebras. Then for every $n\in\BN$ the map $W_n(C_1)\otimes_{W_n(C)}W_n(C_2)\to W_n(C_1\otimes_CC_2)$ is an isomorphism. 
\end{lem}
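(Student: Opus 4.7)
The plan is a two-step reduction: first from arbitrary $C$ to $C=\BF_p$, and then from semiperfect $C_1,C_2$ to perfect $C_1,C_2$; the perfect case is handled by a flat-lifting argument.

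For the first reduction, observe that $C_1\otimes_{\BF_p}C_2$ is semiperfect, since Frobenius on the tensor equals $\Fr\otimes\Fr$, a ring homomorphism whose image is a subring containing every pure tensor and hence the entire ring. Writing $C_1\otimes_C C_2$ as the quotient of $C_1\otimes_{\BF_p}C_2$ by the ideal generated by the elements $c\otimes 1-1\otimes c$ for $c\in C$, Lemma~\ref{l:W(quotient)} identifies the kernel of $W_n(C_1\otimes_{\BF_p}C_2)\twoheadrightarrow W_n(C_1\otimes_C C_2)$ with the ideal generated by $V^m([c\otimes 1]-[1\otimes c])$ for $c\in C$, $0\le m<n$. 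On the other hand, $W_n(C_1)\otimes_{W_n(C)}W_n(C_2)$ is the quotient of $W_n(C_1)\otimes_{\BZ/p^n}W_n(C_2)$ by the ideal generated by $x\otimes 1-1\otimes x$ for $x\in W_n(C)$; since every such $x$ has the form $\sum V^m([c_m])$ and $V^m$ is additive, this ideal is already generated by $V^m([c])\otimes 1-1\otimes V^m([c])$ for $c\in C$. The case $C=\BF_p$ of the lemma applied to $C_1,C_2$ as $\BF_p$-algebras then matches the two generating sets via the naturality of $V^m$.

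Assume now $C=\BF_p$ and set $\tilde C_i:=C_i^{\flat}$, so that $\tilde C_i$ is perfect and $\nu_0\colon \tilde C_i\twoheadrightarrow C_i$ has some kernel $I_i$; by Lemma~\ref{l:W(quotient)}, $W_n(C_i)=W_n(\tilde C_i)/J_i$ with $J_i$ generated by $V^m([b])$ for $b\in I_i$, $0\le m<n$. The ring $\tilde C_1\otimes_{\BF_p}\tilde C_2$ is again perfect, since its Frobenius equals $\Fr\otimes\Fr$, a tensor of bijections; and it surjects onto $C_1\otimes_{\BF_p}C_2$ with kernel generated by $I_1\otimes 1$ and $1\otimes I_2$. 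Applying Lemma~\ref{l:W(quotient)} once more gives $W_n(C_1\otimes_{\BF_p}C_2)=W_n(\tilde C_1\otimes_{\BF_p}\tilde C_2)/K$, where $K$ is generated by $V^m([b\otimes 1])$ and $V^m([1\otimes b'])$ for $b\in I_1$, $b'\in I_2$. Granting the perfect case below, the isomorphism $W_n(\tilde C_1)\otimes_{\BZ/p^n}W_n(\tilde C_2)\iso W_n(\tilde C_1\otimes_{\BF_p}\tilde C_2)$ carries $J_1\otimes 1+1\otimes J_2$ precisely onto $K$, and passing to quotients yields the semiperfect case.

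For the perfect case (where $C=\BF_p$ and $\tilde C_1,\tilde C_2$ are perfect), both sides of the natural map are flat $\BZ/p^n$-algebras: on the left, $W_n(\tilde C_i)=W(\tilde C_i)/p^n$ is flat because $W(\tilde C_i)$ is $p$-torsion free for reduced $\tilde C_i$, and a tensor product of flats is flat; on the right, $\tilde C_1\otimes_{\BF_p}\tilde C_2$ is perfect, so $W_n$ of it is $W/p^n$ of a perfect ring and thus flat. Modulo $p$, both sides recover $\tilde C_1\otimes_{\BF_p}\tilde C_2$, and the natural map becomes the identity. A homomorphism $\phi$ of flat $\BZ/p^n$-modules that is an isomorphism modulo $p$ must itself be an isomorphism: the cokernel $Q$ satisfies $Q=pQ$, hence $Q=p^nQ=0$, giving surjectivity, after which the resulting short exact sequence $0\to\ker\phi\to W_n(\tilde C_1)\otimes_{\BZ/p^n}W_n(\tilde C_2)\to W_n(\tilde C_1\otimes_{\BF_p}\tilde C_2)\to 0$ remains exact after reducing modulo $p$ (by flatness of the target), so $\ker\phi/p\,\ker\phi=0$ and $\ker\phi=0$. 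The main obstacle is really the bookkeeping in the first two steps, matching generating sets across the quotient isomorphisms; the genuine structural input---uniqueness of flat $\BZ/p^n$-lifts of perfect $\BF_p$-algebras---enters only in this last step.
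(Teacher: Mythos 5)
Your proof is correct, and it rests on the same two pillars as the paper's argument --- Lemma~\ref{l:W(quotient)} and the perfect base case --- but it is organized as a genuinely different reduction. The paper does everything in one pass: writing $C_j=B_j/I_j$ with $B_j$ perfect, it notes that $\Ker(B_1\otimes B_2\epi C_1\otimes_C C_2)$ is generated by the elements $b_1\otimes 1-1\otimes b_2$ with $(b_1,b_2)$ mapping into the image of $C$ in $C_1\times C_2$, applies Lemma~\ref{l:W(quotient)} once to this single surjection, and then merely observes that the resulting generators $V^m[b_1]\otimes 1-1\otimes V^m[b_2]$ have zero image in $W_n(C_1)\otimes_{W_n(C)}W_n(C_2)$; this produces the inverse map directly, with no need to describe the kernel of $W_n(C_1)\otimes_{\BZ/p^n}W_n(C_2)\epi W_n(C_1)\otimes_{W_n(C)}W_n(C_2)$. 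Your two-step route (first from general $C$ to $C=\BF_p$, then from semiperfect to perfect) costs three applications of Lemma~\ref{l:W(quotient)} and the generator-matching on both sides of each quotient, so it is longer but equally valid, and the bookkeeping you do (e.g.\ reducing the relations for $\otimes_{W_n(C)}$ to $V^m[c]\otimes 1-1\otimes V^m[c]$, and identifying the images $V^m[c\otimes 1]-V^m[1\otimes c]$ via functoriality of $V$ and Teichm\"uller lifts) is all sound. What your write-up adds over the paper is a proof of the step the paper dismisses as ``easy to see,'' namely $W_n(B_1)\otimes_{\BZ/p^n}W_n(B_2)\iso W_n(B_1\otimes_{\BF_p}B_2)$ for perfect $B_1,B_2$: your argument via flatness over $\BZ/p^n$ and reduction mod $p$ (using that $W$ of a perfect ring is $p$-torsion free, that $W_n=W/p^n$ and $W_n/p\cong B$ in the perfect case, and that a map of flat $\BZ/p^n$-modules which is an isomorphism mod $p$ is an isomorphism) is a clean way to settle it, and it is the only place where anything beyond generator bookkeeping is needed. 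One small caveat: the phrase ``$W(\tilde C_i)$ is $p$-torsion free for reduced $\tilde C_i$'' is true, but the identification $W_n(\tilde C_i)=W(\tilde C_i)/p^n$ that you also use requires perfectness (via $V^nW=p^nW$), which of course holds for $\tilde C_i=C_i^\flat$.
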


\begin{proof}
Write $C_j=B_j/I_j$, where $B_j$ is perfect (e.g., one can take $B_j=C_j^\flat$). It is easy to see that $W_n(B_1\otimes B_2)= W_n(B_1)\otimes W_n(B_2)$.
The ideal $\Ker (B_1\otimes B_2\epi C_1\otimes_CC_2)$ is generated by elements of the form $b_1\otimes 1-1\otimes b_2$, where $b_j\in B_j$ are such that the image of $(b_1,b_2)$ in $C_1\times C_2$ is contained in $\im (C\to C_1\times C_2)$. So by Lemma~\ref{l:W(quotient)}, $W_n(C_1\otimes_CC_2)$ is the quotient of 
$W_n(B_1)\otimes W_n(B_2)$ by the ideal generated by elements of the form $V^m[b_1]\otimes 1-1\otimes V^m[b_2]$, where $b_1,b_2$ are as above and $0\le m<n$. These elements have zero image in $W_n(C_1)\otimes_{W_n(C)}W_n(C_2)$.
\end{proof}

\begin{lem}   \label{l:PD hulls}
The formation of PD hulls commutes with flat base change.
\end{lem}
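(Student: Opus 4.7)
The plan is to verify that $D \otimes_R R'$ satisfies the universal property of the PD hull of $JR'$ in $R'$, where $R \to R'$ denotes the given flat ring map, $J \subset R$ the ideal, and $D$ the PD hull of $J$ in $R$ (in the sense fixed in the excerpt, i.e.\ with the compatibility $\gamma_m(p) = p^m/m!$). This breaks into two tasks: (a) equip $D \otimes_R R'$ with the structure of a PD thickening of $R'/JR'$ over $R'$, and (b) check that it is initial among such.

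For (a), write $\bar J := \Ker(D \twoheadrightarrow R/J)$ with its inherited PD structure. Flatness of $R \to R'$ identifies $\Ker(D \otimes_R R' \twoheadrightarrow R'/JR')$ with $\bar J \otimes_R R'$. The standard fact that PD structures extend along flat ring homomorphisms---equivalently, that the divided-power-algebra functor $\Gamma_R(-)$ commutes with flat base change---then provides a PD structure on $\bar J \otimes_R R'$ as an ideal of $D \otimes_R R'$. Compatibility with the canonical PD structure on $(p)$ is inherited from $D$, since $p$ lies already in $R$.

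For (b), let $A$ be an arbitrary PD thickening of $R'/JR'$ over $R'$. The composite $R \to R' \to A$ realizes $A$ also as a PD thickening of $R/J$ over $R$ (since $J$ lands in the PD ideal of $A$), so the universal property of $D$ yields a unique PD $R$-algebra map $D \to A$. Pairing this with the $R'$-algebra structure on $A$ produces a unique $R'$-algebra map $\phi \colon D \otimes_R R' \to A$; that $\phi$ respects PD structures follows from the compatibility of the construction in (a) with the PD structure on $A$, and uniqueness at each factor yields uniqueness of $\phi$.

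The main obstacle is the input invoked in step (a), namely that $\Gamma_R$ commutes with flat base change (equivalently, that a PD structure on an ideal transports along any flat ring extension). This is classical (see, e.g., Berthelot--Ogus, \textit{Notes on Crystalline Cohomology}, Proposition 3.21); the proof reduces to the case of a free $R$-module $M$, where each graded piece $\Gamma_R^n(M)$ is itself free with an explicit basis of divided-power monomials in the basis elements of $M$ and is therefore preserved by arbitrary base change, after which the general case follows by presenting an arbitrary module as a quotient of a free one.
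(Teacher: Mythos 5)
Your argument is sound, but note that the paper does not actually prove this lemma: its ``proof'' is a one-line citation of Berthelot \cite[Prop.~I.2.7.1]{B74}. What you sketch is essentially the standard argument (in substance, Berthelot's own): identify $\Ker\bigl(D\otimes_R R'\twoheadrightarrow R'/JR'\bigr)$ with $\bar J\otimes_R R'$ using flatness of $R\to R'$, extend the divided powers, and verify the universal property. So the comparison is: the paper buys brevity by outsourcing, your version buys self-containedness modulo one classical input. If you want it airtight, three small points deserve to be made explicit. First, the flat ring map along which the PD structure is extended is $D\to D\otimes_R R'$ (flat because it is a base change of $R\to R'$), and the extended structure lives on $\bar J\cdot(D\otimes_R R')=\bar J\otimes_R R'$; the precise classical input is ``divided powers extend along flat ring maps,'' which is indeed standard (your attribution to Berthelot--Ogus is in the right place, though the numbered statement there is close to the base-change result itself, so quote the extension lemma rather than the envelope statement to avoid circularity). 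Second, in step (b) the test objects for the universal property are all PD algebras over $R'$, compatible with $\gamma_m(p)=p^m/m!$, whose PD ideal receives $JR'$ --- not only thickenings of $R'/JR'$ --- and the composite $R\to R'\to A$ makes $A$ such a test object over $R$ (it is not literally a thickening of $R/J$); this is harmless but should be said correctly, and the compatibility with the divided powers of $p$ must be carried through both uses of the universal property. Third, that $\phi\colon D\otimes_R R'\to A$ respects divided powers is not automatic from (a): one checks that the set of elements $x$ of the PD ideal with $\phi(\gamma_n(x))=\delta_n(\phi(x))$ for all $n$ is an ideal, so it suffices to verify this on the generators coming from $\bar J$, where it holds because $D\to A$ is a PD morphism. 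With these points filled in, your proof is a correct replacement for the paper's citation.
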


\begin{proof}
This  was proved by Berthelot \cite[Prop.~I.2.7.1]{B74}.
\end{proof}

\begin{prop}   \label{p:flat perfect base change}
Let $B$ be an $\BF_p$-algebra and $B'$ a flat $B$-algebra. Suppose that $B$ and $B'$ are perfect. Let $C$ be a semiperfect $B$-algebra. Then 
the canonical map $$A_{\cris}(C)\hat\otimes_{W(B)}W(B')\to A_{\cris}(C\otimes_BB')$$ is an isomorphism.
\end{prop}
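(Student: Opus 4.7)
The plan is to reduce the statement modulo $p^n$ for each $n$ and then combine Lemma~\ref{l:PD hulls} (flat base change for PD hulls) with Lemma~\ref{l:W(tensor product)} and the description of $A_{\cris}(C)/p^n$ as a PD hull given in the remark after Proposition~\ref{p:finitistic}. Since $p$-adic completion commutes with reduction modulo $p^n$, it suffices to prove that for every $n$ the natural map
\[
A_{\cris}(C)/p^n \otimes_{W_n(B)} W_n(B') \to A_{\cris}(C\otimes_B B')/p^n
\]
is an isomorphism; note that $C\otimes_B B'$ is semiperfect, since Frobenius is bijective on $B'$ and surjective on $C$.

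The key technical input is flatness of $W_n(B)\to W_n(B')$. Since $B$ and $B'$ are perfect $\BF_p$-algebras, the rings $W(B)$ and $W(B')$ are $p$-torsion free and $p$-adically complete with $W(B)/p = B$ and $W(B')/p = B'$ (using $p\cdot W(B)=VW(B)$ and perfectness). The local criterion of flatness for $p$-adically complete $p$-torsion-free modules over $p$-adically complete $p$-torsion-free rings then upgrades flatness of $B\to B'$ to flatness of $W(B)\to W(B')$, hence of $W_n(B)\to W_n(B')$. Base-changing along the $W_n(B)$-algebra $W_n(C)$ and invoking Lemma~\ref{l:W(tensor product)}, we obtain a flat ring map $W_n(C)\to W_n(C\otimes_B B')$.

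Under the identification of Lemma~\ref{l:W(tensor product)}, the ideal $\Ker(\Fr^n\colon W_n(C)\epi C)$ extends to $\Ker(\Fr^n\colon W_n(C\otimes_B B')\epi C\otimes_B B')$ (this is simply the naturality of $(a_0,\dots,a_{n-1})\mapsto a_0^{p^n}$ in the semiperfect algebra). Applying Lemma~\ref{l:PD hulls} to this flat base change, together with the PD-hull description of both $A_{\cris}$'s coming from the remark after Proposition~\ref{p:finitistic}, yields the isomorphism
\[
A_{\cris}(C)/p^n \otimes_{W_n(C)} W_n(C\otimes_B B') \iso A_{\cris}(C\otimes_B B')/p^n,
\]
which is the desired map after rewriting $\otimes_{W_n(C)}W_n(C\otimes_B B')$ as $\otimes_{W_n(B)} W_n(B')$. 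The one nontrivial obstacle is the flatness claim for $W_n(B)\to W_n(B')$; since $W(B)$ need not be Noetherian and $B'$ is not assumed to be of finite type over $B$, one must invoke the complete form of the local flatness criterion rather than its familiar Noetherian version, and it is important here that both $W(B)$ and $W(B')$ are genuinely $p$-adically complete (which uses perfectness of $B$ and $B'$).
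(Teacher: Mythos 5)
Your proposal is correct and follows essentially the same route as the paper's proof: reduce modulo $p^n$, describe both sides as PD hulls using Proposition~\ref{p:finitistic} (and the remark following it), identify the relevant Witt rings via Lemma~\ref{l:W(tensor product)}, and conclude by flat base change for PD hulls (Lemma~\ref{l:PD hulls}), the only point the paper leaves implicit being the flatness of $W_n(B)\to W_n(B')$, which you supply. One small economy: only the truncated statement is needed, and it follows directly from the local criterion of flatness for the nilpotent ideal $(p)\subset W_n(B)$ (using $p$-torsion-freeness of $W(B')$ and $W_n(B')/pW_n(B')\cong B'$ flat over $B=W_n(B)/pW_n(B)$), so you can avoid asserting flatness of the full map $W(B)\to W(B')$, where the non-Noetherian, complete setting makes the ``local criterion'' you invoke less standard.
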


\begin{proof}
It suffices to show that for each $n\in\BN$ the map
\begin{equation}   \label{e:the map mod p^n}
(A_{\cris}(C)/p^nA_{\cris}(C))\otimes_{W_n(B)}W_n(B')\to A_{\cris}(C\otimes_BB')/p^nA_{\cris}(C\otimes_BB')
\end{equation}
is an isomorphism. In the proof of Proposition~\ref{p:finitistic} it was shown that
$A_{\cris} (C)/p^nA_{\cris} (C)$ is the PD hull of the epimorphism
\begin{equation}  \label{e:2economic epi}
W_n(C^\flat/\Ker\nu_n)\epi C
\end{equation}
induced by $\nu_0: C^\flat\epi C$. One also has a similar description of $A_{\cris}(C\otimes_BB')/p^nA_{\cris}(C\otimes_BB')$; combining it with Lemma~\ref{l:W(tensor product)}, we see that $A_{\cris}(C\otimes_BB')/p^nA_{\cris}(C\otimes_BB')$  is the PD hull of the epimorphism
\[
W_n(C^\flat/\Ker\nu_n)\otimes_{W_n(B)}W_n(B')\epi C\otimes_{W_n(B)}W_n(B')=C\otimes_BB'
\]
obtained from \eqref{e:2economic epi} by base change via $f:W_n(B)\to W_n(B')$. It is easy to see that $f$ is flat, so the map \eqref{e:the map mod p^n} is an isomorphism by Lemma~\ref{l:PD hulls}.
\end{proof}

\subsection{The functor $A_{\cris}$ for schemes}   \label{ss:Acris for schemes}
\begin{prop}   \label{p:etale local}
For any $n\in\BN$, the functor $C\mapsto A_{\cris} (C)/p^nA_{\cris} (C)$ commutes with etale localization.
\end{prop}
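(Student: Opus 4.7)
The plan is to reduce the étale compatibility to the compatibility of PD hulls with flat base change (Lemma~\ref{l:PD hulls}), combining the finitary description of $A_{\cris}(C)/p^nA_{\cris}(C)$ from the proof of Proposition~\ref{p:finitistic} with the classical compatibility of the Witt vector functor $W_n$ with étale morphisms.

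First I would use the Remark following Proposition~\ref{p:finitistic}: $A_{\cris}(C)/p^nA_{\cris}(C)$ is the PD hull of the surjection $\phi_n\colon W_n(C)\twoheadrightarrow C$, $(a_0,\ldots,a_{n-1})\mapsto a_0^{p^n}$. Given an étale morphism $C\to C'$, the intermediate claim I need is that $W_n(C)\to W_n(C')$ is étale and that the canonical map
\[
W_n(C)\otimes_C C'\ \longrightarrow\ W_n(C')
\]
(formed via the first-coordinate projection $W_n(C)\to C$) is an isomorphism. This follows from the fact that the kernel $VW_n(C)$ of that projection is a nilpotent ideal, so the étale $C$-algebra $C'$ admits a unique étale lift to a $W_n(C)$-algebra, and by uniqueness this lift must coincide with $W_n(C')$.

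Granted this claim, Lemma~\ref{l:PD hulls} applied to the (étale, hence flat) morphism $W_n(C)\to W_n(C')$ identifies the base change of $A_{\cris}(C)/p^nA_{\cris}(C)$ with the PD hull of the induced surjection $W_n(C')\twoheadrightarrow C\otimes_{W_n(C),\phi_n}W_n(C')$. Factoring $\phi_n=\Fr^n\circ\mathrm{pr}$ and using the intermediate claim, one computes $C\otimes_{W_n(C),\phi_n}W_n(C')\cong C\otimes_{C,\Fr^n}C'$; since $C\to C'$ is étale, iterating the isomorphism given by relative Frobenius yields $C\otimes_{C,\Fr^n}C'\cong C'$. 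Hence the base-changed PD hull is precisely $A_{\cris}(C')/p^nA_{\cris}(C')$, as required.

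The main obstacle is the intermediate claim about étale base change for $W_n$. Although it is classical (see e.g.\ the work of Borger or Langer--Zink), its justification rests on the unique-lifting property of étale morphisms along nilpotent thickenings, which is the sole non-formal input. Once that claim is in hand, everything else follows mechanically from Lemma~\ref{l:PD hulls} and the étale invariance of relative Frobenius.
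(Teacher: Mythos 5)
Your argument is correct in substance, but it takes a genuinely different route from the paper. The paper's proof is a one-liner: it combines the universal property of Proposition~\ref{p:Fontaine universality} (that $A_{\cris}(C)/p^nA_{\cris}(C)$ is initial among PD thickenings of $C$ killed by $p^n$) with Lemma~\ref{l:PD hulls}, so the étale lifting happens directly along the thickening $A_{\cris}(C)/p^nA_{\cris}(C)\epi C$, whose kernel is nil by Lemma~\ref{l:Fontaine universality}; no input about Witt vectors is needed. You instead use the presentation of $A_{\cris}(C)/p^nA_{\cris}(C)$ as the PD hull of $W_n(C)\overset{\Fr^n\circ\,\mathrm{pr}}\epi C$ (the Remark after Proposition~\ref{p:finitistic}) and import the étale base change theorem for truncated Witt vectors (van der Kallen, Langer--Zink, Borger). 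Granting that theorem, the rest is fine: $W_n(C)\to W_n(C')$ is étale, hence flat, so Lemma~\ref{l:PD hulls} applies, and $C\otimes_{W_n(C),\phi_n}W_n(C')\cong C'$ because the iterated relative Frobenius of an étale morphism is an isomorphism. The trade-off: your proof is more explicit, but it rests on a nontrivial external theorem, whereas the paper's argument stays self-contained and works at the level where the lifting problem is trivial.

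Two caveats. First, your justification of the Witt-vector claim is circular as stated: uniqueness of étale lifts along the nilpotent thickening $W_n(C)\epi C$ identifies the lift with $W_n(C')$ only once you already know that $W_n(C')$ is étale over $W_n(C)$ with $W_n(C')\otimes_{W_n(C)}C\cong C'$ --- and that is precisely the content of the theorem you are trying to justify. So you should cite it as a known result rather than deduce it from unique lifting. Second, the displayed map $W_n(C)\otimes_C C'\to W_n(C')$ is ill-formed, since there is no ring homomorphism $C\to W_n(C)$; what you mean is that the canonical map $W_n(C')\otimes_{W_n(C)}C\to C'$ (base change along the first-coordinate projection) is an isomorphism and $W_n(C)\to W_n(C')$ is étale.
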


\begin{proof}   
Follows from Proposition~\ref{p:Fontaine universality} combined with Lemma~\ref{l:PD hulls}.
\end{proof}

Let $X$ be a semiperfect $\BF_p$-scheme. Then one defines a $p$-adic formal scheme $A_{\cris}(X)$ as follows: its underlying topological space is that of $X$, and its structure sheaf is  the $p$-adic completion of the PD hull of the surjection $W(\cO_X)\epi\cO_X$. By Proposition~\ref{p:etale local}, $A_{\cris}(X)$ is a scheme, and the functor $X\mapsto A_{\cris}(X)$ commutes with etale localization.

\subsection{The key example} \label{ss:thekeyexample}
\subsubsection{The ring $C$}    \label{sss:1thekeyexample}
Let $B$ be a perfect $\BF_p$-algebra. Let $B[x_1^{p^{-\infty}},\ldots ,x_n^{p^{-\infty}}]$ denote the perfection of $B[x_1,\ldots ,x_n]$; in other words,
an element of $B[x_1^{p^{-\infty}},\ldots ,x_n^{p^{-\infty}}]$ is a finite sum
\begin{equation}   \label{e:perfectpolyn}
\sum_{\alpha\in\BZ_+[1/p]^n} b_\alpha x^\alpha , \quad b_\alpha\in B,
\end{equation}
where $\BZ_+[1/p]:=\{\alpha\in\BZ [1/p]\, |\,\alpha\ge 0\}$ and for $\alpha=(\alpha_1,\ldots,\alpha_n)\in\BZ_+[1/p]^n$ we set
\[
x^\alpha:=\prod_{i=1}^n x_i^{\alpha_i}.
\]
Now let  $C:=B[x_1^{p^{-\infty}},\ldots ,x_n^{p^{-\infty}}]/(x_1,\ldots , x_n)$. Then $C$ is semiperfect. In \S\ref{sss:2thekeyexample}-\ref{sss:3thekeyexample} we will describe $C^\flat$, $W(C^\flat )$, $W(C)$, and $C_{\cris}\,$ for such $C$. This class of semiperfect $\BF_p$-algebras is important because of Proposition~\ref{p:Tate as schemes} and the following remark.

\begin{rem} \label{r:why important}
Let $X$ be an affine scheme etale over $\BA^n_k$, where $k$ is a perfect field of characteristic $p$. Let $X_{\perf}$ be the perfection of $X$.
Then for each $m\in\BN$ the fiber product (over $X$) of $m$ copies of $X_{\perf}$ is isomorphic to $\Spec C$, where $C$ is as in \S\ref{sss:1thekeyexample}. Let us note that fiber products of this type appear in \cite{BMS}. 
\end{rem}

\subsubsection{$C^\flat$, $W(C^\flat )$, and $W(C)$}   \label{sss:2thekeyexample}
Clearly $C^\flat$ is the ring of formal series \eqref{e:perfectpolyn} such that the set $$\{\alpha\in\BZ_+[1/p]^n \,|\,b_\alpha\ne 0\}$$ is discrete in $\BR^n$. 
The Witt ring $W(C^\flat )$ identifies with the ring of formal series 
\begin{equation}   \label{e:W(Cflat )}
\sum_{\alpha\in\BZ_+[1/p]^n} a_\alpha x^\alpha , \quad a_\alpha\in W(B)
\end{equation}
such that $a_\alpha\to 0$ when $\alpha$ runs through any bounded subset of $\BZ_+[1/p]^n$ (boundedness in the sense of $\BR^n$). 
Let is note that in formula \eqref{e:W(Cflat )} and similar formulas below $x^\alpha$ 
really means $X_1^{\alpha_1}\cdot\ldots\cdot X_n^{\alpha_n}$, where $X_i=[x_i]$ is the Teichm\"uller representative.

By Lemma~\ref{l:W(quotient)}, $W(C)$ is the quotient of $W(C^\flat )$ by the ideal topologically generated by $p^mx_i^{p^{-m}}$, $m\ge 0$, $i=1,\ldots , n$.

\subsubsection{The ring $A_{\cris} (C)$}    \label{sss:3thekeyexample}
For a real number $y\ge0$ let $(y!)_p$ be the maximal power of $p$ dividing $\lfloor y\rfloor!$, where $\lfloor y\rfloor$ is the integral part of $y$; equivalently,
$$(y!)_p=p^{s (y)}, \mbox{ where }s(y):=\sum_{j=1}^\infty \lfloor y/p^j\rfloor .$$ 
For $\alpha\in\BZ_+[1/p]^n$ let 
$$(\alpha !)_p:=\prod_{i=1}^n (\alpha_i !)_p\,.$$
Then $A_{\cris} (C)$ identifies with the ring of formal series
\begin{equation}   \label{e:explicit Acris}
\sum_{\alpha\in\BZ_+[1/p]^n} a_\alpha \frac{x^\alpha}{(\alpha !)_p} , \quad a_\alpha\in W(B), \quad a_\alpha\to 0.
\end{equation}

\subsection{The homomorphism $F:A_{\cris}(C)\to A_{\cris}(C)$}   \label{ss:F/p}
Let $C$ be a semiperfect $\BF_p$-algebra. By functoriality, the endomorphism $\Fr_C\in\End C$ induces endomorphisms $F\in\End W(C^\flat )$ and $F\in\End A_{\cris}(C)$. There is no map $V:A_{\cris}(C)\to A_{\cris}(C)$; instead, the following proposition gives a partially defined map $V^{-1}=p^{-1}F$.

\begin{prop}   \label{p:SW on F/p}
Let $I_{\cris}(C)\subset A_{\cris}(C)$ be the kernel of the canonical epimorphism $A_{\cris}(C)\epi C$. Then there exists a unique $F$-linear map $F':I_{\cris}(C)\to A_{\cris}(C)$ such that
\[
F(b)=F'(pb) \quad\mbox{ for } b\in A_{\cris}(C),
\]
\[
F'(\gamma_n(b))=\frac{p^{n-1}}{n!}\cdot F'(b)^n \quad\mbox{ for } b\in I_{\cris}(C),\, n\in\BN,
\]
\[
F'(b)=(p-1)!\cdot \gamma_p(b)+\delta (b) \quad\mbox{ for } b\in\Ker (W(C^\flat )\epi C^\flat\overset{\nu_0}\epi C),
\]
where $\delta: W(C^\flat )\to W(C^\flat )$ is defined by 
\begin{equation}   \label{e:Joyal's delta}
\delta (b):=\frac{F(b)-b^p}{p}. 
\end{equation}
\end{prop}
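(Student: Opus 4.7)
The proof-strategy is dictated by the slogan $F'=F/p$ on the PD ideal. Since $W(C^\flat)$ is $p$-torsion-free and $F$ lifts the $p$-power map, the identity $F(b)=b^p+p\delta(b)$ holds on all of $W(C^\flat)$. For $b\in J:=\Ker(W(C^\flat)\epi C)$, the PD structure on $A_{\cris}(C)$ gives $b^p=p!\gamma_p(b)=p\cdot(p-1)!\gamma_p(b)$, so $F(b)=p\cdot[(p-1)!\gamma_p(b)+\delta(b)]$, motivating formula (3) as the value of $F'$ on $J$. Formula (2) is then forced by the compatibility of $F/p$ with divided powers: if $F(b)=pF'(b)$ and $F$ commutes with $\gamma_n$ (which holds because $F$ is a PD morphism of $A_{\cris}(C)$ by functoriality of the PD hull applied to $\Fr_C$), then $F(\gamma_n(b))=\gamma_n(pF'(b))=(p^n/n!)F'(b)^n$, and dividing by $p$ gives $F'(\gamma_n(b))=(p^{n-1}/n!)F'(b)^n$.

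Uniqueness is the easy direction. The ideal $I_{\cris}(C)$ is generated as an $A_{\cris}(C)$-module by $J$ together with the divided powers $\gamma_n(J)$ for $n\ge1$. Condition (3) pins down $F'|_J$, condition (2) then determines $F'$ on each $\gamma_n(J)$, and $F$-linearity extends $F'$ uniquely to the full ideal $I_{\cris}(C)$.

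For existence I would proceed as follows. First I would define $F'$ on $J$ by formula (3) and verify $F$-semilinearity over $W(C^\flat)$, namely $F'(wb)=F(w)F'(b)$ for $w\in W(C^\flat)$, $b\in J$; this reduces to an algebraic identity among $\delta$ and $\gamma_p$ that follows from the defining formula \eqref{e:Joyal's delta}. Next I would extend $F'$ to the divided powers $\gamma_n(J)$ via formula (2) and further $F$-linearly to all of $I_{\cris}(C)$. The key point is well-definedness. In the $p$-torsion-free ``model'' situation of \S\ref{ss:thekeyexample}, where $A_{\cris}(C)$ is the explicit divided-power series ring \eqref{e:explicit Acris}, one verifies directly on the generators $\gamma_n([x_i])$ that $F(I_{\cris}(C))\subset pA_{\cris}(C)$, and simply sets $F':=F/p$, making the axioms manifest. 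For general semiperfect $C$, I would reduce the check to the model case using the filtered-colimit property (Proposition~\ref{p:finitistic}) and the flat base-change property (Proposition~\ref{p:flat perfect base change}) of $A_{\cris}$, together with the evident functoriality of the formulas (2)--(3). Finally, condition (1) is a direct calculation: for $b\in W(C^\flat)$, one has $pb\in J$ and $F'(pb)=(p-1)!\gamma_p(pb)+\delta(pb)=p^{p-1}b^p+F(b)-p^{p-1}b^p=F(b)$; the case $b=\gamma_n(c)$ with $c\in J$ then follows from $F$-linearity and formula (2).

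The main obstacle is well-definedness of $F'$ in the general case where $A_{\cris}(C)$ may have $p$-torsion and a literal $F/p$ is unavailable. A direct verification that formulas (2) and (3) are consistent with all the defining relations of the PD algebra $A_{\cris}(C)$ (including PD-multiplicativity, additivity of $\gamma_n$ on sums, etc.) is possible but quite technical; the reduction to the $p$-torsion-free model case via functoriality is what I expect to carry the bulk of the work, and is likely the point of departure from the argument in \cite{SW}.
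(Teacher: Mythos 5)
You have correctly isolated the crux --- when $A_{\cris}(C)$ has $p$-torsion there is no literal $F/p$, so one must show the formulas define a consistent map --- but the step you rely on to resolve it is a genuine gap. You propose to verify the construction in the torsion-free model case of \S\ref{sss:1thekeyexample} and then ``reduce the check to the model case'' for general semiperfect $C$ via Propositions~\ref{p:finitistic} and~\ref{p:flat perfect base change}. Neither result provides such a reduction. Proposition~\ref{p:finitistic} is only useful if $C$ is exhibited as a filtered colimit of algebras of the form $B[x_1^{p^{-\infty}},\ldots ,x_n^{p^{-\infty}}]/(x_1,\ldots ,x_n)$, and a general semiperfect algebra (an arbitrary quotient of a perfect ring) admits no such presentation: if it did, then $A_{\cris}(C)/p^nA_{\cris}(C)$ would be a filtered colimit of $\BZ/p^n$-flat modules for every semiperfect $C$, forcing $A_{\cris}(C)$ to be $p$-torsion-free in general --- contradicting the very phenomenon (torsion in PD hulls of non-regular ideals) that you yourself identify as the source of the difficulty. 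Proposition~\ref{p:flat perfect base change} does not help either: it only replaces the perfect coefficient ring $B$ by a flat perfect $B'$ and cannot change the shape of the ideal cutting out $C$. Note also that the proposition is stated and later used for arbitrary semiperfect $C$ (the map $f:M(C)\to A_{\cris}(C)$ of \S\ref{sss:M(C) to Acris(C)} and Corollary~\ref{c:(F')^n} are invoked with no goodness hypothesis, e.g.\ in the part of the proof of Proposition~\ref{p:N pronilpotent} proved ``without assuming $C\in\Good_k$''), so even a complete argument for $C\in\Good_k$ would not suffice.

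For comparison, the paper's existence proof (\S\ref{sss:proof of SW on F/p}) needs no reduction and no torsion-freeness. It rests on the abstract Lemma~\ref{l:PD lemma}: with $B=W(C^\flat )$, $I=\Ker (B\epi C)$ and $(B',I')$ the PD hull, one equips $R=B'$ with the non-unital multiplication $x*y=pxy$ and divided powers $\gamma_n(x)=\frac{p^{n-1}}{n!}x^n$ (integral formulas, no division by $p$), checks that $\psi (b):=(p-1)!\,\gamma_p(b)+\delta (b)$ is a homomorphism of $B$-algebras $I\to (R,*)$ with $\psi (pb)=F(b)$, and then extends $\psi$ to $I'$ using Berthelot's construction of the PD hull \cite[Thm.~I.2.3.1]{B74}; passing to $p$-adic completions yields $F'$. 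Your motivational computations (deriving (3) from $F(b)=b^p+p\delta (b)$, the uniqueness argument, and the verification of (1)) are consistent with this, but to repair your proof you would need either a universal-property argument of this kind or a direct verification of compatibility with all relations in the PD hull, essentially as in \cite{SW}.
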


This is essentially \cite[Lemma~4.1.8]{SW}, but the proof given in \S\ref{sss:proof of SW on F/p} below is different from the one in \cite{SW}.

\subsubsection{A general setup}
Let $\BZ_{(p)}:=\BZ_p\cap\BQ$. 
Let $B$ be a $\BZ_{(p)}$-algebra and $I\subset B$ an ideal such that $p\in I$. Let $R$ be a ring equipped with an additive map $\psi :I\to R$ such that
\begin{equation}   \label{e:1psi}
\psi (b_1b_2)=\psi (pb_1)\psi (b_2) \quad\mbox{ for } b_1\in B, \, b_2\in I,
\end{equation}
\begin{equation}   \label{e:2psi}
\psi (p)=1.
\end{equation}
Then the map $\varphi :B\to R$ defined by
\[
\varphi (b)=\psi (pb)
\]
is a unital ring homomorphism. Moreover, $\psi$ is a $B$-module homomorphism if the $B$-module structure on $R$ is defined using $\varphi$.

Let $(B',I')$ be the PD hull\footnote{The definitions of PD thickening and PD hull include the condition $\gamma_m(p)=p^m/m!$.} of $(B,I)$. If $R$ is $\BZ_{(p)}$-flat then $pR$ is a PD ideal in $R$, so $\varphi$ extends uniquely to a PD morphism
$\varphi' :(B',I')\to (R,pR)$. Here is a statement in this spirit \emph{without} assuming $R$ to be $\BZ_{(p)}$-flat.

\begin{lem}   \label{l:PD lemma}
(i) Equip $R$ with multiplication $x*y:=pxy$ and divided power operations $\gamma_n(x):=\frac{p^{n-1}}{n!}\cdot x^n$. Then
$R$ is a (non-unital) PD algebra\footnote{This means that $R$ is a PD ideal of the $B$-algebra $B\oplus R$ obtained from $R$ by formally adding the unit.} over $B$.

(ii) The map $\psi :I\to (R,*)$ is a homomorphism of $B$-algebras.

(iii) There exists a unique homomorphism $\psi':I'\to (R,*)$ of PD algebras over $B$ extending $\psi :I\to R$.

(iv) One has $\psi'(b_1b_2)=\psi'(pb_1)\psi'(b_2)$ for $b_1\in B'$, $b_2\in I'$.

(v) The map $\varphi':B'\to R$ defined by $\varphi'(b):=\psi'(pb)$ is a ring homomorphism. Moreover, $\psi':I'\to B'$ is a $B'$-module homomorphism if the $B'$-module structure on $R$ is defined using~$\varphi'$.
\end{lem}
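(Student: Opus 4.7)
The plan is to verify the five assertions in order, with part (iii) carrying the bulk of the work. For (i), I first note that $p^{n-1}/n!\in\BZ_{(p)}$ (since $v_p(n!)\le n-1$), so the formula $\gamma_n(x):=\frac{p^{n-1}}{n!}x^n$ defines an operation on the $\BZ_{(p)}$-module $R$. To realize the non-unital PD structure concretely, I would introduce the unital $B$-algebra
\[
\hat R:=B\oplus R,\qquad (b_1,r_1)(b_2,r_2):=(b_1b_2,\,\varphi(b_1)r_2+\varphi(b_2)r_1+pr_1r_2),
\]
with unit $(1,0)$ and $B$-algebra structure $b\mapsto(b,0)$; associativity is a direct check, and the ideal $0\oplus R$ inherits the multiplication $*$. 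The PD axioms for $\gamma_n$ on this ideal reduce to direct identities---$\gamma_m(x)*\gamma_n(x)=\binom{m+n}{m}\gamma_{m+n}(x)$, the addition formula for $\gamma_n(x+y)$, the scaling $\gamma_n(b\cdot x)=b^n\gamma_n(x)$ for $b\in B$, and the self-composition $\gamma_m(\gamma_n(x))=\frac{(mn)!}{m!(n!)^m}\gamma_{mn}(x)$---each verified by tracking the powers of $p$.

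For (ii), additivity of $\psi$ gives $\psi(pb)=p\psi(b)$, so for $b_1,b_2\in I$ we have $\psi(b_1)*\psi(b_2)=p\psi(b_1)\psi(b_2)=\psi(pb_1)\psi(b_2)=\psi(b_1b_2)$ by the first hypothesis; the $B$-linearity $\psi(bc)=\varphi(b)\psi(c)$ for $b\in B$, $c\in I$ is a restatement of that same hypothesis.

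For (iii), the main step, I would construct $\psi'$ explicitly by setting $\psi'(\gamma_n(b)):=\gamma_n(\psi(b))$ for $b\in I$ and $n\ge1$, and extending to all of $I'$ by requiring $\psi'$ to be $B$-linear (via $\varphi$) and multiplicative in the sense $\psi'(x_1x_2)=\psi'(x_1)*\psi'(x_2)$ for $x_1,x_2\in I'$. Since $I'$ is the $B$-module spanned by products $\gamma_{n_1}(b_1)\cdots\gamma_{n_k}(b_k)$ with $b_i\in I$, $n_i\ge1$, this prescription determines $\psi'$ uniquely, and well-definedness reduces to checking that each defining relation of the PD hull is respected. The PD-algebra axioms $\gamma_m(b)\gamma_n(b)=\binom{m+n}{m}\gamma_{m+n}(b)$, $\gamma_n(b+c)=\sum\gamma_i(b)\gamma_j(c)$, $\gamma_n(rb)=r^n\gamma_n(b)$ for $r\in B$, and $\gamma_m(\gamma_n(b))=\frac{(mn)!}{m!(n!)^m}\gamma_{mn}(b)$ transport to the corresponding axioms on $R$ from~(i); and the ring-theoretic consistency $n!\gamma_n(b)=b^n$ requires $\psi'(n!\gamma_n(b))=\psi(b^n)$, which follows by induction on $n$ from~(ii) because $\psi(b^n)=\varphi(b)\psi(b^{n-1})=\ldots=p^{n-1}\psi(b)^n$.

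Parts (iv) and (v) are then formal consequences. For (iv), using the decomposition $B'=B+I'$, I write $b_1=c+x$ with $c\in B$ and $x\in I'$; both $\psi'(b_1b_2)$ and $\psi'(pb_1)\psi'(b_2)$ simplify to $\varphi(c)\psi'(b_2)+p\psi'(x)\psi'(b_2)$, using $\psi'(x)*\psi'(b_2)=p\psi'(x)\psi'(b_2)$, $\psi'(px)=p\psi'(x)$, and $\psi'(pc)=\varphi(c)$. For (v), unitality is $\varphi'(1)=\psi'(p)=1$; multiplicativity of $\varphi'$ follows from (iv) with $pb_2\in I'$ replacing $b_2$; and the $B'$-linearity of $\psi'$ (with $B'$ acting via $\varphi'$) is a repackaging of~(iv). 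The main obstacle is the well-definedness check in (iii)---particularly for the self-composition axiom and the identity $n!\gamma_n(b)=b^n$---both of which hinge on careful bookkeeping of the $p$-powers produced by the $*$-multiplication.
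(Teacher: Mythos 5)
Your proposal is correct and follows essentially the same route as the paper: parts (i)--(ii) are direct verifications, part (iii) is obtained from part (ii) together with the explicit construction of the PD hull $(B',I')$ as a quotient of the free divided-power algebra $\Gamma_B^+(I)$ (which the paper simply cites as Berthelot, Thm.~I.2.3.1, while you unwind it), and parts (iv)--(v) are deduced formally from (iii) by splitting $B'=B+I'$. The only thing worth tightening is the well-definedness step in (iii): the defining relations of $I'$ as a quotient of $\Gamma_B^+(I)$ are generated by $\gamma_1(x)\gamma_1(y)=\gamma_1(xy)$ for $x,y\in I$ (the identification of the formal degree-one piece with the actual ideal $I\subset B$), and it is exactly hypothesis \eqref{e:1psi}, via part (ii), that makes $\psi$ respect this relation; the check $\psi(b^n)=p^{n-1}\psi(b)^n$ you carry out is the special case $x=y=b$ iterated, so your list of relations to verify is a little redundant but not wrong.
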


\begin{proof}
Checking (i)-(ii) is straightforward. Statement (iii) follows from (ii) and the construction of $(B',I')$ in the proof of Theorem I.2.3.1 of \cite{B74}. 

Since the map $B/I\to B'/I'$ is an isomorphism, it suffices to check (iv) if $b_1\in I'$ and if $b_1$ belongs to the image of $B$. In these cases (iv) follows from (iii).

Statement (v) follows from (iv).
\end{proof}

\subsubsection{Proof of Proposition~\ref{p:SW on F/p}}   \label{sss:proof of SW on F/p}
Uniqueness is clear. To prove existence, we apply Lem\-ma~\ref{l:PD lemma} as follows.

Let $B:=W(C^\flat )$. Let $I\subset B$ be the kernel of the composed map $B\epi C^\flat\overset{\nu_0}\epi C$. Let $(B',I')$ be the PD hull of $(B,I)$. Let $R:=B'$. 
For $b\in I$ set $\psi (b):=(p-1)!\cdot \gamma_p(b)+\delta (b)$, where
$\delta (b)\in B$ is given by \eqref{e:Joyal's delta}.

One checks that $\psi (pb)=F(b)$ for $b\in B=W(C^\flat )$. One also checks that $\psi$ is additive and satisfies \eqref{e:1psi}-\eqref{e:2psi}. Applying Lemma~\ref{l:PD lemma}, we get $\psi':I'\to B'$. Passing to $p$-adic completions, one gets the desired map $F':I_{\cris}(C)\to A_{\cris}(C)$. \qed

\medskip

Let us note that for $x\in W(C^\flat )$ one has
\begin{equation}  \label{e:F'V}
F'Vx=F'(pF^{-1}x)=F(F^{-1}x)=x.
\end{equation}

\begin{prop}   \label{p:beta F'}
Let $\beta :A_{\cris} (C)\epi W(C)$ be the canonical epimorphism constructed in \S\ref{sss:beta}.
Then the following diagram commutes:
\[
\CD
I_{\cris} (C)\  @>F' >>  A_{\cris} (C)   \\
@V\beta VV    @VV\beta  V   \\
V (W(C))    @> {V^{-1}}  >> W(C)
\endCD
\]
\end{prop}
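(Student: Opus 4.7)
The plan is to realize both $\beta \circ F'$ and $V^{-1} \circ \beta$ as extensions, via \lemref{l:PD lemma}, of one and the same map on $I := \Ker(W(C^\flat) \epi C)$, and then appeal to uniqueness. To begin, note that the Witt-vector shift $V : W(C) \to W(C)$ is injective, so $V^{-1}$ is well-defined on $V(W(C)) = \Ker(W(C) \epi C)$. Since $\beta$ is a morphism of PD thickenings of~$C$, it sends $I_{\cris}(C)$ into $V(W(C))$, whence $V^{-1} \circ \beta : I_{\cris}(C) \to W(C)$ is a well-defined additive map.

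Next I would apply \lemref{l:PD lemma} with $B = W(C^\flat)$, $R = W(C)$, and $\psi(b) := V^{-1}(\beta(b))$. The three hypotheses---additivity, $\psi(p) = 1$, and $\psi(b_1 b_2) = \psi(pb_1)\psi(b_2)$---follow from the Witt-vector identities $p = V(1)$ (valid in characteristic~$p$), $FV = p$, and $V(a) \cdot b = V(a \cdot Fb)$. Combined with $p$-adic completion, this produces a canonical extension $\tilde\psi : I_{\cris}(C) \to W(C)$, and by the uniqueness clause of \lemref{l:PD lemma}, any additive map $f : I_{\cris}(C) \to W(C)$ extending $\psi$ and satisfying $f(\gamma_n(b)) = \frac{p^{n-1}}{n!} f(b)^n$ together with $f(b_1 b_2) = F(\beta(b_1)) f(b_2)$ (for $b_1 \in A_{\cris}(C)$, $b_2 \in I_{\cris}(C)$) must equal $\tilde\psi$.

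Both $V^{-1} \circ \beta$ and $\beta \circ F'$ satisfy these structural properties on $I_{\cris}(C)$: for $V^{-1} \circ \beta$ this follows from the PD compatibility of $\beta$ combined with the explicit formula $\gamma_n(Va) = \frac{p^{n-1}}{n!} V(a^n)$ for the PD structure on $V(W(C))$; for $\beta \circ F'$ it follows from the defining properties of $F'$ in \propref{p:SW on F/p} together with the fact that $\beta$ is a Frobenius-equivariant ring homomorphism. It therefore remains only to verify that both maps agree with $\psi$ on $I$.

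By additivity, this reduces to the two classes of generators $b = [u]$ with $u \in \Ker(\nu_0)$ and $b = Vb'$ with $b' \in W(C^\flat)$. For $b = [u]$ one has $\beta([u]) = [0] = 0$ and $\delta([u]) = ([u^p] - [u]^p)/p = 0$ since $[u]^p = [u^p]$, whence $\beta(F'([u])) = (p-1)!\gamma_p(\beta([u])) = 0 = V^{-1}(\beta([u]))$. For $b = Vb'$, formula~\eqref{e:F'V} gives $F'(Vb') = b'$, and hence $\beta(F'(Vb')) = \beta(b') = V^{-1}(V\beta(b')) = V^{-1}(\beta(Vb'))$. The main obstacle is the bookkeeping around \lemref{l:PD lemma}: identifying the correct non-unital PD structure $(W(C), *)$ with $x*y = pxy$, verifying the three hypotheses for $\psi$, and confirming the PD-type compatibility of both $\beta \circ F'$ and $V^{-1} \circ \beta$. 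Once this scaffolding is in place, the verification on the two generator types is essentially immediate from~\eqref{e:F'V} and the vanishing of $\delta$ on Teichm\"uller lifts.
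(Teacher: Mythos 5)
Your argument is correct, and its core coincides with the paper's: both proofs reduce to checking $\beta\circ F'=V^{-1}\circ\beta$ on $I=\Ker(W(C^\flat)\epi C)$ by writing an element additively as $[u]+Vb'$ with $u\in\Ker\nu_0$, and the two verifications there are identical (formula \eqref{e:F'V} for the $V$-part; vanishing of $\delta$ on Teichm\"uller lifts and of $\beta([u])$ for the other part). Where you diverge is the extension from $I$ to $I_{\cris}(C)$: the paper propagates the identity to the divided powers $\gamma_n(b)$, $b\in I$, and concludes because these topologically generate $I_{\cris}(C)$ (leaving the needed $F$-semilinearity and continuity of both maps implicit), whereas you re-apply \lemref{l:PD lemma} with $R=W(C)$ and $\psi=V^{-1}\circ\beta|_I$ and invoke its uniqueness clause. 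This is a nice repackaging --- the same lemma that constructs $F'$ also characterizes $V^{-1}\circ\beta$, and it makes explicit exactly which structural properties pin the map down --- but it shifts the burden to hypothesis checks and completion bookkeeping that you only gesture at: the uniqueness in \lemref{l:PD lemma}(iii) lives on the PD hull $I'$, so to conclude on $I_{\cris}(C)$ you should note that the image of $I'$ is dense in $I_{\cris}(C)$, that both maps are $p$-adically continuous (automatic from the semilinearity, which forces $f(p^nb)=p^nf(b)$), and that $W(C)$ is $p$-adically separated for semiperfect $C$ (indeed $p^nW(C)=V^nW(C)$ because $F$ is surjective on $W(C)$). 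One small point of attribution: the semilinearity $F'(b_1b_2)=F(b_1)F'(b_2)$ that you use for $\beta\circ F'$ is not literally among the three displayed properties in \propref{p:SW on F/p}; it comes from the construction of $F'$ via \lemref{l:PD lemma}(iv)--(v). These are routine verifications rather than gaps in the idea.
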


\begin{proof}
By \eqref{e:F'V}, the identity $\beta (F'(b))=V^{-1}(\beta (b))$ holds if $b\in\Ker (W(C^\flat )\epi C^\flat )$; it also holds if $b$ is the Teichm\"uller representative of an element of $\Ker (C^\flat\overset{\nu_0}\epi C)$. So it holds for any 
element $b\in\Ker (W(C^\flat )\epi C^\flat\overset{\nu_0}\epi C)$. This implies that
$$\beta (F'(\gamma_n(b)))=V^{-1}(\beta (\gamma_n(b)))\mbox{ for all } n\in\BN, b\in\Ker (W(C^\flat )\epi C^\flat\overset{\nu_0}\epi C).$$ 
This is enough because the ideal $I_{\cris} (C)\subset A_{\cris} (C)$ is topologically generated by $\gamma_n (b)$, where $b$ and $n$ are as above.
\end{proof}

\begin{cor}  \label{c:(F')^n}
Let $\beta_n:A_{\cris} (C)\to W_n(C)$ be the map induced by $\beta :A_{\cris} (C)\to W (C)$.
Then for every $n\in\BN$ the operator $F':I_{\cris} (C)\to A_{\cris} (C)$ maps $\Ker\beta_{n}$ to $\Ker\beta_{n-1}$. 
So we have the map $(F')^n:\Ker\beta_n\to A_{\cris} (C)$.  \qed
\end{cor}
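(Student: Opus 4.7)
The plan is to read the statement off Proposition~\ref{p:beta F'} by a straightforward induction on $n$. First I would unwind the definition: since $\beta_n$ is the composition of $\beta:A_{\cris}(C)\to W(C)$ with the canonical projection $W(C)\epi W_n(C)$, and the latter has kernel $V^nW(C)$, one has
\[
\Ker\beta_n=\beta^{-1}(V^nW(C)).
\]
For $n\ge 1$, this gives $\Ker\beta_n\subset\beta^{-1}(VW(C))\subset I_{\cris}(C)$, because the canonical map $A_{\cris}(C)\epi C$ factors as $A_{\cris}(C)\overset{\beta}\to W(C)\epi C$ and the second arrow has kernel $VW(C)$. So it is legitimate to apply $F'$ to any element of $\Ker\beta_n$ when $n\ge 1$.

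Next I would perform the inductive step. Given $b\in\Ker\beta_n$ with $n\ge 1$, write $\beta(b)=V^na$ for some $a\in W(C)$. Since $V^na\in V(W(C))$, Proposition~\ref{p:beta F'} applies and yields
\[
\beta(F'(b))=V^{-1}(\beta(b))=V^{-1}(V^na)=V^{n-1}a\in V^{n-1}W(C),
\]
so $F'(b)\in\Ker\beta_{n-1}$. This is the only content.

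Iterating the previous step $n$ times carries $\Ker\beta_n$ into $\Ker\beta_{n-1}$, then into $\Ker\beta_{n-2}$, and eventually into $\Ker\beta_0=A_{\cris}(C)$ (the last equality being automatic from $W_0(C)=0$). At each intermediate stage $k\ge 1$ the element lies in $\Ker\beta_k\subset I_{\cris}(C)$ by the observation above, so every application of $F'$ is defined. There is no real obstacle: the entire argument is a mechanical iteration of the commutative square in Proposition~\ref{p:beta F'}, and the only thing to be careful about is confirming that each intermediate output still lies in the domain $I_{\cris}(C)$ of $F'$, which is immediate from $\Ker\beta_k\subset I_{\cris}(C)$ for $k\ge 1$.
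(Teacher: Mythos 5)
Your argument is correct and is exactly the intended one: the paper states the corollary with no written proof because it is an immediate iteration of the commutative square in Proposition~\ref{p:beta F'}, which is precisely what you carry out. Your extra check that $\Ker\beta_n\subset I_{\cris}(C)$ for $n\ge 1$ (via the factorization $A_{\cris}(C)\overset{\beta}\to W(C)\epi C$ of the canonical epimorphism) is the right point to verify so that $F'$ may be applied at each stage.
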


\begin{rem}  \label{r:(F')^n on Witt}
Using \eqref{e:F'V}, we see that the restriction of $(F')^n$ to $\Ker (W(C^\flat)\epi W_n(C))$ is very simple: namely, $(F')^nV^nu=u$ for $u\in W(C^\flat )$ and $(F')^nV^i[c]=(p^{n-i}-1)!\cdot\gamma_{p^{n-i}}([c])$ for $c\in \Ker (C^\flat\overset{\nu_0}\epi C)$.
\end{rem}

\subsection{The module $M(C)$ and the map $M(C)\to A_{\cris} (C)$}  \label{ss:M(C)}
In this section we define and study a $W(C^\flat )[F,V]$-module $M(C)$ and a canonical $W(C^\flat )[F]$-morphism
$M(C)\to A_{\cris} (C)$. This material is used in the proof of Proposition~\ref{p:N pronilpotent}. 

Let us note that $M(C)$ is a submodule of the module $\overline{M} (C)$ introduced in \S\ref{sss:bar M(C)} below; the latter goes back to Fontaine's book \cite{F77}.

\subsubsection{Definition of $M(C)$} \label{sss:M(C)}

For any $\BF_p$-algebra $B$ set
\[
W(B)[V^{-1}]:=\underset{\longrightarrow}\lim (W(B)\overset{V}\longrightarrow W(B)\overset{V}\longrightarrow\ldots )=\bigcup_n V^{-n}W(B).
\]
We equip $W(B)$ and $W(B^\flat )$ with their natural topologies (they come from the presentation of $W(B)$ as a projective limit of $W_m(B)$ and the presentation of $W(B^\flat )$ as a projective limit of $W(B^\flat/\Ker\nu_n)$. We equip $W(B)[V^{-1}]$ with the inductive limit topology. Then
$W(B)[V^{-1}]$ is a complete topological $W(B^\flat )$-module\footnote{An element $u\in W(B^\flat )$ acts on $W(B)[V^{-1}]$ by 
$V^{-n}x\mapsto V^{-n}((F^{-n}u)\cdot x)$.} equipped with operators $F,V:W(B)[V^{-1}]\to W(B)[V^{-1}]$ satisfying the usual identities.  Each element of $W(B)[V^{-1}]$ has a unique expansion 
\[
\sum_{m=-\infty}^{\infty}V^m[x_m],
\]
where $x_m\in B$ and $x_m=0$ for sufficiently negative $m$.

Recall that $C^\flat$ is the projective limit of $C^\flat/\Ker\nu_n $, where $\nu_n$ is as in \S\ref{ss:Fontaine's def}.
We equip $C^\flat$ with the projective limit topology.

The projective limit of the topological modules $W(C^\flat/\Ker\nu_n )[V^{-1}]$, $n\in\BN$,  is a topological $W(C^\flat)$-module equipped with operators $F,V$ satisfying the usual identities. Let $M(C)$ be the preimage of $W(C)\subset W(C)[V^{-1}]$ in this projective limit. Again, $M(C)$ is a topological $W(C^\flat)$-module equipped with operators $F,V:M(C)\to M(C)$. The map $\nu_0: C^\flat\epi C$ induces a canonical epimorphism $M(C)\epi W(C)$. 

\begin{prop}   \label{p:M(C)}
(i) $M(C)$ is complete with respect to the above topology.

(ii) Each element of $M(C)$ has a unique expansion
\begin{equation}  \label{e:M(C) expansion}
\sum_{n=-\infty}^{\infty}V^n[x_n], \mbox{ where } x_n\in C^\flat, \, x_n\in\Ker (C^\flat\overset{\nu_0}\epi C)\mbox{ for }n<0, \,\lim\limits_{n\to -\infty} x_n= 0.
\end{equation}

(iii) $\Ker (M(C)\overset{p}\longrightarrow M(C))=0$.

(iv) An element  \eqref{e:M(C) expansion} belongs to $p^rM(C)$ if and only if $x_n\in\Ker (C^\flat\overset{\nu_r}\epi C)$ for $n<0$ and $x_n\in\Ker (C^\flat\overset{\nu_{r-n}}\epi C)$ for $0\le n<r$.

(v) The topology on $M(C)$ defined above is equal to the $p$-adic topology of $M(C)$.

(vi) The $W(C^\flat )$-module $M(C)/pM(C)$ is canonically isomorphic to the associated graded of the filtration
$$C^\flat\supset\Ker\nu_1\supset\Ker\nu_2\supset\ldots\;.$$ The isomorphism is as follows: the map 
$C^\flat/\Ker\nu_1\to M(C)/pM(C)$ is induced by the map
\[
C^\flat\to M(C), \quad c\mapsto [c]\in W(C^\flat )\subset M(C),
\]
and for $r>0$ the map $\Ker\nu_r/\Ker\nu_{r+1}\to M(C)/pM(C)$ is induced by the map
\[
\Ker\nu_r\to M(C), \quad c\mapsto p^{-r}[c]=V^{-r}[c^{p^{-r}}]\in M(C).
\]
\end{prop}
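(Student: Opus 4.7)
The plan is to establish the expansion theorem (ii) first and derive the remaining parts from it. For (ii), I unwind the definition of $M(C)$ as the preimage of $W(C)$ in $\varprojlim_n W(C^\flat/\Ker\nu_n)[V^{-1}]$. Each element of $W(B)[V^{-1}]$ for an $\BF_p$-algebra $B$ has a unique expansion $\sum_m V^m[\bar y_m]$ with $\bar y_m \in B$ vanishing for $m$ sufficiently small. Compatibility of such expansions at each level in the inverse limit assembles into elements $y_m\in\varprojlim_n C^\flat/\Ker\nu_n=C^\flat$. The preimage-of-$W(C)$ condition at level $n=0$ reads $y_m\in\Ker\nu_0$ for $m<0$, and the level-$n$ finiteness condition reads $y_m\in\Ker\nu_n$ eventually, equivalent to $y_m\to 0$ in $C^\flat$.

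Parts (i) and (iii) follow quickly. (i) holds because $M(C)$ is a closed subspace of a projective limit of complete topological modules. For (iii), the computation $p\cdot\sum V^n[y_n]=\sum V^{n+1}[y_n^p]$ (using $p=VF$, $FV=VF$ on Witt vectors of an $\BF_p$-algebra, together with $F[y]=[y^p]$) combined with uniqueness in (ii) and perfection of $C^\flat$ gives injectivity of multiplication by $p$.

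For (iv), iterate the above to get $p^r(\sum V^n[y_n])=\sum V^m[y_{m-r}^{p^r}]$. The arithmetic key is $\nu_r\circ\Fr^r=\nu_0$ on $C^\flat$ (equivalently $\Fr^n(\Ker\nu_0)=\Ker\nu_n$). Forward: if $y\in M(C)$ and $m<r$, then $y_{m-r}\in\Ker\nu_0$, so $\nu_r(x_m)=\nu_0(y_{m-r})=0$, which implies the stated conditions (which for $0\le n<r$ are weaker, since $\Ker\nu_r\subset\Ker\nu_{r-n}$). Converse: by perfection of $C^\flat$, uniquely define $y_n:=x_{n+r}^{1/p^r}$; the requirement $y_n\in\Ker\nu_0$ for $n<0$ translates via $\nu_0\circ\Fr^{-r}=\nu_r$ into $\nu_r(x_{n+r})=0$, which is to be read off from the hypotheses; convergence $x_m\to 0$ transfers to $y_n\to 0$ by continuity of Frobenius.

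For (v), the description of $p^rM(C)$ from (iv) gives an explicit neighborhood basis of $0$ in the $p$-adic topology that matches the projective-limit topology. For (vi), specializing (iv) to $r=1$ gives $pM(C)$; one then reads off $M(C)/pM(C)$ coordinate-wise, obtaining a $C^\flat/\Ker\nu_1$-summand at $n=0$ and a $\Ker\nu_0/\Ker\nu_1$-summand at each $n=-r<0$. The bijection $\Fr^{-r}$ restricts to an isomorphism $\Ker\nu_r\iso\Ker\nu_0$ sending $\Ker\nu_{r+1}$ onto $\Ker\nu_1$, identifying each of these summands with $\Ker\nu_r/\Ker\nu_{r+1}$ via $c\mapsto c^{p^{-r}}$, matching the formula in the statement. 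The main obstacle is the careful bookkeeping in the backward direction of (iv): matching the piecewise conditions on the $x_m$ against the single clean requirement $\nu_r(x_{n+r})=0$, which relies on systematic use of the identities $\nu_k\circ\Fr^j=\nu_{k-j}$ and the fact that $\Fr$ is a bijection on the perfect ring $C^\flat$.
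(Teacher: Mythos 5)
Your arguments for (i), (ii) and (iii) are fine (the paper gives no proof here, calling it straightforward), but there is a genuine gap in your converse direction of (iv), and it is exactly at the spot you wave away with ``which is to be read off from the hypotheses''. Your own forward computation shows that $\sum_n V^n[x_n]\in p^rM(C)$ forces $x_n\in\Ker\nu_r$ for \emph{every} $n<r$: the unique $p^r$-th divisor in the ambient module $\varprojlim_n W(C^\flat/\Ker\nu_n)[V^{-1}]$ (uniqueness because $p^r$ acts on canonical coefficients by $x_n\mapsto x_{n-r}^{p^r}$ and $C^\flat$ is perfect) is $\sum_n V^{n-r}[x_n^{p^{-r}}]$, and it lies in $M(C)$ if and only if $\nu_0(x_n^{p^{-r}})=\nu_r(x_n)=0$ for all $n<r$. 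For $0\le n<r$ the stated hypothesis is only $x_n\in\Ker\nu_{r-n}$, which is strictly weaker than $x_n\in\Ker\nu_r$, so the converse cannot be ``read off''; indeed the printed ``if'' direction is false for $r\ge 2$. Concretely, for $C=k[x_1^{p^{-\infty}}]/(x_1)$ take $c:=x_1^p\in C^\flat$; then $c\in\Ker\nu_1$ but $c\notin\Ker\nu_2$, the element $V[c]$ lies in $M(C)$ and satisfies the displayed conditions with $r=2$, yet $V[c]=p^2\,V^{-1}[c^{p^{-2}}]$ and $V^{-1}[c^{p^{-2}}]\notin M(C)$ since $\nu_0(c^{p^{-2}})=\nu_2(c)\ne 0$; hence $V[c]\notin p^2M(C)$. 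The correct criterion is $x_n\in\Ker\nu_r$ for all $n<r$; the indices $\nu_{r-n}$ in the statement are the ones adapted to the renormalized digits $x_n^{p^{-n}}$, i.e.\ to writing the $n\ge 0$ part of the expansion as $\sum_n p^n[\,\cdot\,]$ rather than $\sum_n V^n[\,\cdot\,]$ (compare the identity $p^{-r}[c]=V^{-r}[c^{p^{-r}}]$ appearing in (vi)). What you should do is prove the corrected criterion (your forward computation already does both directions once the condition is stated correctly) and record this renormalization, rather than assert the printed converse.

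With the corrected (iv), your route to (v) is sound: $p^rM(C)$ is precisely the preimage in $M(C)$ of the open subgroup $V^rW(C^\flat/\Ker\nu_r)$ at level $r$, which gives the comparison of the two topologies. For (vi) (which only uses $r=1$, where the printed and corrected criteria agree) the phrase ``reads off coordinate-wise'' hides the one nontrivial check: Witt addition is not coordinatewise, so you must verify that the maps $c\mapsto [c]$ and $c\mapsto V^{-r}[c^{p^{-r}}]$ are additive modulo $pM(C)$. This comes down to showing that the correction terms of $[a+b]-[a]-[b]$ (universal polynomials whose monomials involve both $a$ and $b$, of total degree $\ge p$) land in the right piece of the filtration; this is automatic for $C$ as in the key example (where it is used), but it is a genuine verification and not a formality, since for an arbitrary semiperfect $C$ the product of $p$ elements of $\Ker\nu_0$ need not lie in $\Ker\nu_1$.
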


The proof of the proposition is straightforward and left to the reader.

\subsubsection{The submodule $M_0(C)\subset M(C)$}    \label{sss:M_0(C)}
Let $M_0(C)$ be the set of all $x\in M(C)$ such that $V^nx\in W(C^\flat )$ (or equivalently, $p^nx\in W(C^\flat ))$ for some $n\ge 0$. Clearly $M_0(C)$ is a $W(C^\flat )[F,V]$-submodule of $M(C)$. One can also think of $M_0(C)$ as a subset of $W(C^\flat )[V^{-1}]=W(C^\flat )[p^{-1}]$; an element $x\in W(C^\flat )[p^{-1}]$ belongs to $M_0(C)$ if and only if for some (or all) $n\ge 0$ such that $V^nx\in W(C^\flat )$ one has $V^nx\in\Ker\beta_n$, where $\beta_n: W(C^\flat )\epi W_n(C)$ is as in Corollary~\ref{c:(F')^n}.

\begin{prop}   \label{p:M_0(C)}
$M(C)$ is the $p$-adic completion of $M_0(C)$. 
\end{prop}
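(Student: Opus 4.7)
The plan is to deduce the statement from two basic facts: first, the completeness and separatedness of $M(C)$ in its $p$-adic topology, which is already known from Proposition~\ref{p:M(C)}(i) and (v); second, the identification of $M_0(C)$ as a dense submodule of $M(C)$ whose induced filtration from $M(C)$ coincides with its intrinsic $p$-adic filtration. Concretely, the second fact has two sub-statements: the filtration identification $p^rM(C)\cap M_0(C)=p^rM_0(C)$, and the density $M_0(C)+p^rM(C)=M(C)$, for all $r\ge 0$.

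The filtration identification is straightforward. The inclusion $\supseteq$ is trivial. For $\subseteq$, take $y\in p^rM(C)\cap M_0(C)$: by $p$-torsion-freeness (Proposition~\ref{p:M(C)}(iii)) there is a unique $x\in M(C)$ with $y=p^rx$, and since $p^Ny\in W(C^\flat)$ for some $N\ge 0$ by the definition of $M_0(C)$, we get $p^{N+r}x\in W(C^\flat)$, so $x\in M_0(C)$ as well, and hence $y=p^rx\in p^rM_0(C)$.

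The main step is density. Fix $x\in M(C)$ and $r\ge 0$; I will construct $z\in M_0(C)$ with $x-z\in p^rM(C)$ by truncating the expansion of Proposition~\ref{p:M(C)}(ii). Writing
\[
x=\sum_{n=-\infty}^{\infty}V^n[x_n]
\]
with $x_n\in C^\flat$, $x_n\in\Ker\nu_0$ for $n<0$, and $\lim_{n\to -\infty}x_n=0$ in the projective limit topology of $C^\flat$, the convergence yields some $N\ge 1$ with $x_n\in\Ker\nu_r$ for all $n\le -N$. Then the truncation
\[
z:=\sum_{n>-N}V^n[x_n]
\]
has only finitely many negative-index summands, so $p^{N-1}z\in W(C^\flat)$, and hence $z\in M_0(C)$ by definition. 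The tail $x-z=\sum_{n\le -N}V^n[x_n]$ lies in $p^rM(C)$ by the criterion of Proposition~\ref{p:M(C)}(iv): every index $n$ appearing in the sum is strictly negative, the corresponding coefficient $x_n$ lies in $\Ker\nu_r$ by the choice of $N$, and the condition of (iv) on indices $0\le n<r$ is vacuous.

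Combining the two sub-statements with the completeness of $M(C)$ gives isomorphisms $M_0(C)/p^rM_0(C)\iso M(C)/p^rM(C)$ for every $r$, and passing to the inverse limit in $r$ yields the desired identification of $M(C)$ with the $p$-adic completion of $M_0(C)$. The only step requiring more than a routine computation is the translation of $\lim_{n\to-\infty}x_n=0$ in $C^\flat$ into $x_n\in\Ker\nu_r$ for $n\ll 0$, which is immediate from the definition of the projective limit topology on $C^\flat$; I do not foresee any serious obstacle.
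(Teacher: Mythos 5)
Your proposal is correct and follows essentially the same route as the paper: establish $p^rM(C)\cap M_0(C)=p^rM_0(C)$, invoke completeness and the coincidence of topologies from Proposition~\ref{p:M(C)}(i,v), and use density of $M_0(C)$ in $M(C)$. The paper states these three ingredients without elaboration, while you correctly fill in the details (torsion-freeness for the filtration identity, truncation of the canonical expansion plus Proposition~\ref{p:M(C)}(iv) for density).
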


\begin{proof}
One has $p^nM(C)\cap M_0(C)=p^nM_0(C)$. It remains to use Proposition~\ref{p:M(C)}(i,v) and density of $M_0(C)$ in $M(C)$.
\end{proof}

\subsubsection{The map $f:M(C)\to A_{\cris}(C)$}    \label{sss:M(C) to Acris(C)}
Define $f_0:M_0(C)\to A_{\cris}(C)$ as follows: $f_0(x):=(F')^nV^nx$, where $n\ge 0$ is so big that $V^nx\in W(C^\flat )$. The map $f_0$ is well-defined: indeed, $(F')^nV^nx$ is defined by Corollary~\ref{c:(F')^n} (because $V^nx\in\Ker\beta_n$) and moreover, $(F')^nV^nx$ does not depend on $n$ by \eqref{e:F'V}. The map $f_0$ is a $W(C^\flat )[F]$-module homomorphism. By Proposition~\ref{p:M(C)}(v), it uniquely extends to a $W(C^\flat )[F]$-module homomorphism  $f:M(C)\to A_{\cris}(C)$.

Recall that each element of $M(C)$ has a unique expansion
\begin{equation}  \label{e:2M(C) expansion}
\sum_{n=-\infty}^{\infty}V^n[x_n], \mbox{ where } x_n\in C^\flat, \, x_n\in\Ker (C^\flat\overset{\nu_0}\epi C)\mbox{ for }n<0, \,\lim\limits_{n\to -\infty} x_n= 0.
\end{equation}

\begin{prop}   \label{p:M(C) to Acris(C)}
(i) The homomorphism $f:M(C)\to A_{\cris}(C)$ takes \eqref{e:2M(C) expansion} to 
\[
\sum_{m=0}^{\infty}V^m[x_m]+\sum_{l=1}^{\infty} (p^l-1)!\cdot\gamma_{p^l}([x_{-l}]).
\]

(ii) The composite map $M(C)\overset{f}\longrightarrow A_{\cris}(C)\overset{\beta}\epi W(C)$ is equal to the canonical epimorphism $M(C)\epi W(C)$.
\end{prop}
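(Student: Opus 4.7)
The plan is to verify both parts on the dense submodule $M_0(C) \subset M(C)$, where $f$ is given by the explicit formula $f_0(x) = (F')^n V^n x$, and then to extend to all of $M(C)$ by continuity. This works because $M(C)$ carries the $p$-adic topology (Proposition~\ref{p:M(C)}(v)), $A_{\cris}(C)$ is $p$-adically complete and Hausdorff, $f$ is a continuous $W(C^\flat)[F]$-module homomorphism, and every element of $M(C)$ is the limit (in its topology) of the finite truncations of its expansion~\eqref{e:2M(C) expansion}, each of which lies in $M_0(C)$.

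For (i), by additivity it suffices to compute $f$ on individual terms $V^n[x_n]$. For $n = m \ge 0$, we have $V^m[x_m] \in W(C^\flat)$ and may take $n = 0$ in the definition of $f_0$; since $(F')^0$ is the identity, $f_0$ restricts to the canonical inclusion $W(C^\flat) \hookrightarrow A_{\cris}(C)$, so $f(V^m[x_m]) = V^m[x_m]$. For $n = -l$ with $l \ge 1$ and $x_{-l} \in \Ker \nu_0$, we take $n = l$, note that $[x_{-l}] \in \Ker \beta_l$ (because $\nu_0(x_{-l}) = 0$), and apply Corollary~\ref{c:(F')^n} together with Remark~\ref{r:(F')^n on Witt} (with $i = 0$) to get
\[
f_0(V^{-l}[x_{-l}]) \,=\, (F')^l V^l V^{-l}[x_{-l}] \,=\, (F')^l[x_{-l}] \,=\, (p^l - 1)! \cdot \gamma_{p^l}([x_{-l}]).
\]
Convergence of the two series in $A_{\cris}(C)$ is then automatic: the positive part lies in $p^m A_{\cris}(C)$ since $V^m[x_m] = p^m[x_m^{p^{-m}}]$, and the negative part is the image under the continuous map $f$ of the series $\sum_{l \ge 1} V^{-l}[x_{-l}]$, which converges $p$-adically in $M(C)$ by Proposition~\ref{p:M(C)}(iv) and the hypothesis $x_{-l} \to 0$ in $C^\flat$.

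For (ii), let $\pi_0: M(C) \epi W(C)$ denote the canonical epimorphism; being induced by $\nu_0$ it agrees with $\beta$ on $W(C^\flat) \subset M(C)$ and commutes with $V$. Given $x \in M_0(C)$, pick $n \ge 0$ with $V^n x \in W(C^\flat)$; by the very definition of $M_0(C)$ we then have $V^n x \in \Ker \beta_n$. Iterating Proposition~\ref{p:beta F'} $n$ times—each application being justified inductively by Corollary~\ref{c:(F')^n}—yields
\[
\beta(f_0(x)) \,=\, \beta\bigl((F')^n V^n x\bigr) \,=\, V^{-n}\beta(V^n x) \,=\, V^{-n}\pi_0(V^n x) \,=\, V^{-n} V^n \pi_0(x) \,=\, \pi_0(x),
\]
where the last step uses injectivity of $V$ on $W(C)$. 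Continuity extends this identity from $M_0(C)$ to all of $M(C)$, proving (ii).

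The one step that requires genuine attention is the iteration in (ii): Proposition~\ref{p:beta F'} only asserts $\beta \circ F' = V^{-1} \circ \beta$ on the single domain $\Ker \beta_1 \subset I_{\cris}(C)$, so one must track that at each stage $(F')^k V^n x$ still belongs to $\Ker \beta_{n-k}$ in order to apply $F'$ again. This is exactly the content of Corollary~\ref{c:(F')^n}, after which the computation is formal.
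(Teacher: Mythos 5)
Your proof is correct and follows essentially the same route as the paper: part (i) is the term-by-term application of Remark~\ref{r:(F')^n on Witt} (via \eqref{e:F'V}) on $M_0(C)$ plus continuity, and part (ii) is one of the two options the paper indicates, namely iterating Proposition~\ref{p:beta F'} with Corollary~\ref{c:(F')^n} guaranteeing each application is legitimate. The extra care you take with convergence of the truncations and with the domains $\Ker\beta_{n-k}$ is exactly the detail the paper leaves implicit.
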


\begin{proof}
Statement (i) follows from Remark~\ref{r:(F')^n on Witt}. Statement (ii) follows from (i) or from Proposition~\ref{p:beta F'}.
\end{proof}

\subsubsection{The image of $f:M(C)\to A_{\cris}(C)$}  \label{sss:A''}
Let
\[
A'_{\cris}(C):=\{u\in A_{\cris}(C)\,|\,p^nu\in F^nA_{\cris}(C) \mbox{ for all }n\in\BN\}
\]
(Roughly, $A'_{\cris}(C)$ is the set of those $u\in A_{\cris}(C)$ for which $V^nu$ is defined for all $n$.)
Let $A''_{\cris}(C)$ be the set of all $u\in A_{\cris}(C)$ such that $p^nu=F^nv_n$ for some sequence $v_n\in A_{\cris}(C)$ converging to $0$. Clearly $A''_{\cris}(C)\subset A'_{\cris}(C)$.

\begin{lem}  \label{l:image of f}
$\im (M(C)\overset{f}\longrightarrow A_{\cris}(C))\subset A''_{\cris}(C)$.
\end{lem}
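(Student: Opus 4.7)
The plan is to exhibit the required sequence $v_n$ explicitly by setting $v_n := f(V^n x)$, and then verify the two conditions in the definition of $A''_{\cris}(C)$: that $p^n f(x) = F^n v_n$, and that $v_n \to 0$ in $A_{\cris}(C)$.

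For the algebraic identity, I would use the standard Witt-vector relation $FV = VF = p$ (valid over any $\BF_p$-algebra), which gives $p^n = F^n V^n$ as operators on $M(C)$. Since $f$ is a $W(C^\flat)[F]$-module homomorphism, it is both $F$-linear and $p$-linear, hence
\[
p^n f(x) = f(p^n x) = f(F^n V^n x) = F^n f(V^n x) = F^n v_n.
\]

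The substantive step is showing $V^n x \to 0$ in the $p$-adic topology of $M(C)$; once this is in hand, the $p$-linearity of $f$ (which gives $f(p^r M(C)) \subseteq p^r A_{\cris}(C)$) yields $v_n \to 0$ in $A_{\cris}(C)$, since $A_{\cris}(C)$ is $p$-adically separated. I would handle the convergence first for $x \in M_0(C)$: by definition there exists $n_0$ with $y := V^{n_0} x \in W(C^\flat)$, and since $C^\flat$ is perfect, $F$ is invertible on $W(C^\flat)$ and $V = p F^{-1}$ there. Consequently $V^k y \in p^k W(C^\flat) \subseteq p^k M(C)$, i.e., $V^{n_0+k} x \in p^k M(C)$. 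For general $x \in M(C)$, I would use that $M_0(C)$ is $p$-adically dense in $M(C)$ (Proposition~\ref{p:M_0(C)}) and that $V$ commutes with multiplication by $p$ (from $VFV = pV$, using $VF = p$): given $r \ge 1$, write $x = x_0 + p^r y$ with $x_0 \in M_0(C)$ and $y \in M(C)$; then for $n$ large enough $V^n x_0 \in p^r M(C)$ by the $M_0$-case, while $V^n(p^r y) = p^r V^n y \in p^r M(C)$ trivially, so $V^n x \in p^r M(C)$.

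The main obstacle is establishing $V^n x \to 0$ in the $p$-adic topology: it rests on the identification $V = pF^{-1}$ on $W(C^\flat)$ (which is where perfectness of $C^\flat$ is essential), together with the nontrivial Proposition~\ref{p:M_0(C)} which identifies the topology on $M(C)$ coming from the construction with the $p$-adic topology of $M_0(C)$'s completion. Everything else is a direct consequence of the $F$-linearity of $f$ and the Witt-vector identities.
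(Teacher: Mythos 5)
Your proof is correct and takes essentially the same route as the paper: set $v_n := f(V^n x)$, get $p^n f(x)=F^n v_n$ from $FV=p$ and $F$-linearity of $f$, and conclude from $V^n x\to 0$. The paper simply asserts the last point; you have spelled it out (via $V=pF^{-1}$ on $W(C^\flat)$ and the $p$-adic density of $M_0(C)$ from Proposition~\ref{p:M_0(C)}), which is a legitimate unwinding of Proposition~\ref{p:M(C)}(iv)--(v).
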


\begin{proof}
Let $y\in M(C)$. Then $p^nf(y)=f(F^nV^ny)=F^nf(V^ny)$. Moreover, $f(V^ny)\to 0$ because $V^ny\to 0$.
\end{proof}

\begin{prop}  \label{p:M(C)= A''_cris(C)}
Suppose that $C$ is as in \S\ref{sss:1thekeyexample}. Then $f:M(C)\to A''_{\cris}(C)$ is an isomorphism.
\end{prop}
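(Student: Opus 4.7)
The plan is to prove injectivity and surjectivity separately, using the explicit descriptions of $W(C^\flat)$ and $A_{\cris}(C)$ from \S\ref{sss:2thekeyexample}--\ref{sss:3thekeyexample}, the formula for $f$ from Proposition~\ref{p:M(C) to Acris(C)}(i), and the $p$-adic structure of $M(C)$ from Proposition~\ref{p:M(C)}. For \textbf{injectivity}, my key observation is that the map $f_0:M_0(C)\to A_{\cris}(C)$ from \S\ref{sss:M(C) to Acris(C)} coincides with the restriction to $M_0(C)$ of the natural inclusion $W(C^\flat)[1/p]\hookrightarrow A_{\cris}(C)[1/p]$: for $y\in M_0(C)$ with $V^ny\in W(C^\flat)\cap\ker\beta_n$, the identity $F=pF'$ from Proposition~\ref{p:SW on F/p} gives $p^ny=F^nV^ny=p^n(F')^nV^ny=p^nf_0(y)$ in $A_{\cris}(C)$, hence $y=f_0(y)$ by $p$-torsion-freeness of $A_{\cris}(C)$. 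To extend to all of $M(C)$, the topologies match: if $y\in M_0(C)$ and $y=p^nu$ in $A_{\cris}(C)$, then the image $\bar y\in W(C)$ satisfies $\bar y=p^n\bar u\in p^nW(C)$, so $V^m(y/p^n)\in W(C^\flat)\cap\ker\beta_m$ for $m$ large, showing $u\in M_0(C)$; hence $p^nA_{\cris}(C)\cap M_0(C)=p^nM_0(C)$, so the $p$-adic completion $M(C)$ embeds as a closed submodule and $f$ is injective.

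For \textbf{surjectivity}, I translate the defining condition of $A''_{\cris}(C)$ into coefficient bounds on $u=\sum_\alpha a_\alpha[x^\alpha]/(\alpha!)_p$. A direct calculation with the formula $F([x^\beta]/(\beta!)_p)=p^{\lfloor\beta\rfloor}[x^{p\beta}]/((p\beta)!)_p$ shows that $p^nu\in F^nA_{\cris}(C)$ iff $v_p(a_\alpha)+n\geq S_n(\alpha/p^n)$ for all $\alpha$, where $S_n(\beta):=\sum_{k=0}^{n-1}\lfloor p^k\beta\rfloor$. Taking $n=r(\alpha):=\max(0,\lfloor\log_p\max_i\alpha_i\rfloor)$ gives the sharp bound $v_p(a_\alpha)\geq s(\alpha)-r(\alpha)$, with $s(\alpha)$ defined by $(\alpha!)_p=p^{s(\alpha)}$. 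For $\alpha\notin[0,1)^n$, setting $\alpha':=\alpha/p^{r(\alpha)}$ (so $\max_i\alpha'_i\in[1,p)$), the element
\[
y_\alpha:=a_\alpha\,p^{r(\alpha)-s(\alpha)}\cdot V^{-r(\alpha)}[x^{\alpha'}]\in M(C)
\]
is well-defined, and Proposition~\ref{p:M(C) to Acris(C)}(i) yields $f(y_\alpha)=a_\alpha[x^\alpha]/(\alpha!)_p$; for $\alpha\in[0,1)^n$ take $y_\alpha:=a_\alpha[x^\alpha]\in W(C^\flat)\subset M(C)$. The candidate preimage is $y:=\sum_\alpha y_\alpha$.

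The \textbf{main obstacle} will be convergence of this sum in the $p$-adic topology of $M(C)$ (Proposition~\ref{p:M(C)}(v)). Using Proposition~\ref{p:M(C)}(iv), I expect $v_p(y_\alpha)=v_p(a_\alpha)+r(\alpha)-s(\alpha)$ for $\alpha\notin[0,1)^n$; this is precisely the $p$-adic valuation of the $\alpha'$-coefficient of $v_{r(\alpha)}$ in the defining relation $p^{r(\alpha)}u=F^{r(\alpha)}v_{r(\alpha)}$. The ``$v_n\to 0$'' part of the $A''_{\cris}$-condition (which is strictly stronger than the mere existence of each $v_n$) provides a uniform lower bound $v_p(y_\alpha)\geq c_{r(\alpha)}$ with $c_r\to\infty$, while the $a_\alpha\to 0$ condition in $A_{\cris}(C)$ handles $\alpha$ with $r(\alpha)$ bounded; together these yield $y_\alpha\to 0$, so $\sum_\alpha y_\alpha$ converges in $M(C)$, and its image under $f$ equals $u$.
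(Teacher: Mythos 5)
Your surjectivity argument is essentially sound: the coefficient bookkeeping (the identity $F\bigl(x^\alpha/(\alpha!)_p\bigr)=p^{\lfloor\alpha\rfloor}x^{p\alpha}/((p\alpha)!)_p$, the sharp bound $v_p(a_\alpha)\ge s(\alpha)-r(\alpha)$ obtained at $n=r(\alpha)$, and the use of ``$v_n\to 0$'' to make $\sum_\alpha y_\alpha$ converge $p$-adically in $M(C)$) is exactly the computation that, in the paper, appears as the identification of $A''_{\cris}(C)$ with series $\sum a'_\alpha x^\alpha/p^{m(\alpha)}$, $a'_\alpha\to 0$. The problem is the injectivity half. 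Your key claim $p^nA_{\cris}(C)\cap M_0(C)=p^nM_0(C)$ is false: take $y=V^{-r}[x_1]\in M_0(C)$ (so $V^ry=[x_1]\in\Ker\beta_r$). Then $f_0(y)=(p^r-1)!\,\gamma_{p^r}([x_1])=[x_1]^{p^r}/p^r=p^{N}\cdot x_1^{p^r}/\bigl((p^r)!\bigr)_p$ with $N=\tfrac{p^r-1}{p-1}-r$, so $f_0(y)\in p^{N}A_{\cris}(C)$ with $N$ arbitrarily large; but by Proposition~\ref{p:M(C)}(iv), $y\notin pM(C)$, since $\nu_1(x_1)$ is the image of $x_1^{1/p}$ in $C$, which is nonzero. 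The same example kills the intermediate inference: here $\beta(f_0(y))=0\in p^nW(C)$ for every $n$, yet $y/p^n=V^{-r-n}[x_1^{p^{-n}}]$ is not in $M_0(C)$, because $V^{r+n}(y/p^n)=[x_1^{p^{-n}}]\notin\Ker\beta_{r+n}$. In other words, the topology that $A_{\cris}(C)$ induces on $M_0(C)$ is strictly coarser than the $p$-adic one (the denominators $(\alpha!)_p$ are much larger than $p^{m(\alpha)}$), so the completion $M(C)$ of $M_0(C)$ cannot be shown to inject into $A_{\cris}(C)$ this way; your first observation, that $f_0$ is the restriction of $W(C^\flat)[1/p]\hookrightarrow A_{\cris}(C)[1/p]$, is correct but does not by itself control the extension to $M(C)$.

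The statement that is true, and that repairs your argument, is $f_0^{-1}\bigl(p^nA''_{\cris}(C)\bigr)=p^nM_0(C)$: membership in $p^nA''_{\cris}(C)$ means $p^n\mid a'_\alpha=a_\alpha p^{m(\alpha)-s(\alpha)}$ for all $\alpha$, and this does translate, via Proposition~\ref{p:M(C)}(iv), into the divisibility of $y$ in $M_0(C)$ --- but proving that is essentially the same coefficient computation as your surjectivity step, done once more for injectivity. The paper short-circuits both halves at once: since $M(C)$ and $A''_{\cris}(C)$ are $p$-torsion-free and $p$-adically complete (Proposition~\ref{p:M(C)}(i),(iii),(v) and the explicit description \eqref{e:explicit A'cris}), it suffices to check that $f$ is an isomorphism modulo $p$, where $M(C)/pM(C)\cong\gr C^\flat$ has $B$-basis $\{x^\alpha\}$ and $A''_{\cris}(C)/p$ has $B$-basis $\{x^\alpha/p^{m(\alpha)}\}$, and $\bar f$ matches these bases. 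You should either adopt that reduction or carry out the $A''$-version of your divisibility claim; as written, the injectivity proof does not go through.
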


\begin{proof}
We will work with the explicit description of $A_{\cris}(C)$ from \S\ref{sss:3thekeyexample}.
Since $F(x^\alpha )=x^{p\alpha}$, one gets the following description of the subsets 
$A''_{\cris}(C)\subset A'_{\cris}(C)\subset A_{\cris}(C)$. 

For $\alpha\in\BZ_+[1/p]^n$ let 
\begin{equation}   \label{e:m(alpha)}
m(\alpha ):=\max (0,\lfloor \log_p\alpha_1\rfloor ,\ldots , \lfloor \log_p\alpha_n\rfloor).
\end{equation}
Then $A''_{\cris} (C)$ (resp. $A'_{\cris} (C)$) identifies with the ring of formal series
\begin{equation}   \label{e:explicit A'cris}
\sum_{\alpha\in\BZ_+[1/p]^n} a_\alpha \frac{x^\alpha}{p^{m(\alpha )}} , \quad a_\alpha\in W(B) 
\end{equation}
such that $a_\alpha\to 0$ (resp.~$a_\alpha\to 0$ when $m(\alpha )$ is bounded).

Since $M(C)$ and $A''_{\cris} (C)$ are $\BZ_p$-flat topologically free $\BZ_p$-modules, it suffices to check that the map $\bar f:M(C)/pM(C)\to A''_{\cris}(C)/pA''_{\cris}(C)$ induced by $f$ is an isomorphism. By \eqref{e:explicit A'cris}, $A''_{\cris}(C)/pA''_{\cris}(C)$ is a free $B$-module with basis $y_\alpha$, $\alpha\in\BZ_+[1/p]^n$, where $y_\alpha\in A''_{\cris}(C)/pA''_{\cris}(C)$ is the image of $x^\alpha/p^{m(\alpha )}\in A''_{\cris}(C)$. On the other hand, by Proposition~\ref{p:M(C)}(vi), $M(C)/pM(C)$ identifies with $\gr C^\flat$, i.e., the associated graded of the decreasing filtration on $C^\flat$ whose $i$-th term equals $\Ker \nu_i$ if $n\ge 1$ and $C^\flat$ if $i\le 0$. It is clear that $\gr^i C^\flat$ is a free $B$-module with basis $x^\alpha$, where
$\alpha\in\BZ_+[1/p]^n$ is such that the number \eqref{e:m(alpha)} equals $i$. It is straightforward to check that $\bar f (x^\alpha )=y_\alpha$, so $\bar f$ is an isomorphism.
 \end{proof}

\section{The Dieudonn\'e functor according to Scholze-Weinstein and Fontaine}  \label{s:Dieudonne}
We fix a perfect field $k$ of characteristic $p$.

\subsection{Tate $k$-groups} 
\begin{defin}   \label{def:Tate}
A \emph{Tate $k$-group} is a group scheme $G$ over $k$ such that $\Ker (G\overset{p}\longrightarrow G)=0$, $\Coker (G\overset{p}\longrightarrow G)$ is finite, and the map 
$G\to \underset{n}{\underset{\longleftarrow}\lim} G/p^nG$ is an isomorphism. The category of all (resp. connected) Tate $k$-groups will be denoted by $\Tate_k$ (resp.~$\Tate_k^{\con}$).
\end{defin}

\begin{rem}  \label{r:Tate groups affine}
Any Tate $k$-group $G$ is affine because it is isomorphic to the projective limit of the finite group schemes $G/p^nG$.
\end{rem}

\subsubsection{Relation to $p$-divisible groups}  \label{sss:Tate&p-divisible}
If $H$ is a $p$-divisible group over $k$ then its Tate module
\[
\HHom (\BQ_p/\BZ_p ,H)=\HHom (\BZ [p^{-1}]/\BZ ,H)=\underset{n}{\underset{\longleftarrow}\lim}\, \HHom (p^{-n}\BZ/\BZ ,H)
\]
is a Tate $k$-group. Thus we get an equivalence between the category of $p$-divisible groups and $\Tate_k$. The inverse equivalence takes $G\in\Tate_k$ to
\[
\underset{\longrightarrow}\lim (G/pG\overset{p}\longrightarrow G/p^2G\overset{p}\longrightarrow G/p^3G\overset{p}\longrightarrow\ldots ).
\]
It identifies the full subcategory $\Tate_k^{\con}\subset\Tate_k$  with the category of connected $p$-divisible groups.

\begin{rem}  \label{r:Tate module via Fr}
If $H$ is a \emph{connected} $p$-divisible $k$-group then its Tate module $\HHom (\BZ [p^{-1}]/\BZ ,H)$ can also be described as
$\Ker (H_{\perf}\to H)$, where 
\[
H_{\perf}=\underset{\longleftarrow}\lim (\ldots \overset{\Fr}\longrightarrow H\overset{\Fr}\longrightarrow H\overset{\Fr}\longrightarrow H).
\]
Indeed, in the connected case $H_{\perf}=\underset{n}{\underset{\longleftarrow}\lim}\, \HHom (p^{-n}\BZ ,H)=\HHom (\BZ [p^{-1}], H)$, so
\[
\Ker (H_{\perf}\to H)=\Ker (\HHom (\BZ [p^{-1}], H)\to\HHom (\BZ, H))=\HHom (\BZ [p^{-1}/\BZ, H).
\]

\end{rem}

\begin{rem}  \label{r:connected times etale}
By \S\ref{sss:Tate&p-divisible}, any Tate $k$-group can be uniquely represented as a direct product of a connected Tate group and a reduced one. Moreover, any reduced Tate $k$-group is a projective limit of etale group schemes.  
\end{rem}

\subsection{Tate $k$-groups as schemes}  \label{ss:Tate as schemes}
Any Tate $k$-group $G$ is a semiperfect scheme. Indeed, 
$$\Ker (G\overset{\Fr}\longrightarrow G)\subset\Ker (G\overset{p}\longrightarrow G)=0,$$
so $\Fr :G\to G$ is a closed embedding.

\begin{prop}  \label{p:Tate as schemes}
The underlying scheme of any connected Tate $k$-group is isomorphic to 
$$\Spec k[x_1^{p^{-\infty}},\ldots ,x_n^{p^{-\infty}}]/(x_1,\ldots , x_n)$$ 
for some $n$.
\end{prop}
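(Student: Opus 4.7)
The plan is to reduce to the setting of Remark~\ref{r:Tate module via Fr}. Via the equivalence of \S\ref{sss:Tate&p-divisible}, I will write $G=T(H)$ for a connected $p$-divisible $k$-group $H$, and invoke the standard structure theorem that identifies the underlying formal scheme of a connected $p$-divisible group with the formal disc $\Spf k[[x_1,\ldots,x_n]]$, where $n=\dim H$. (This smoothness of the formal Lie group underlying a connected $p$-divisible group is classical and I would take it as input.)

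Remark~\ref{r:Tate module via Fr} then identifies $G$ with $\Ker(H_{\perf}\to H)$, i.e.\ the scheme-theoretic fiber at the origin of the canonical projection $H_{\perf}\to H$ to the ``last'' term of the Frobenius tower. On rings of functions this projection corresponds to the inclusion of $\cO(H)=k[[x_1,\ldots,x_n]]$ as the $0$-th term of the filtered colimit
\[
\cO(H_{\perf})=\bigcup_m k[[x_1^{1/p^m},\ldots,x_n^{1/p^m}]]
\]
of the Frobenius tower on $\cO(H)$, where I write the variables at the $m$-th stage as $x_i^{1/p^m}$. To cut out the fiber I impose $x_1=\cdots=x_n=0$: inside the $m$-th stage this is the monomial ideal $((x_1^{1/p^m})^{p^m},\ldots,(x_n^{1/p^m})^{p^m})$, so the quotient is artinian and collapses to the polynomial quotient $k[x_1^{1/p^m},\ldots,x_n^{1/p^m}]/(x_1,\ldots,x_n)$. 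Taking the colimit over $m$ yields
\[
\cO(G)=k[x_1^{p^{-\infty}},\ldots,x_n^{p^{-\infty}}]/(x_1,\ldots,x_n),
\]
as required.

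The main subtlety will be the interplay between formal and non-formal scheme structures: $H$ is a formal scheme, whereas $H_{\perf}$ and $G$ are genuine schemes. The saving point is that the origin is a closed formal subscheme of $H$ cut out by the explicit ideal $(x_1,\ldots,x_n)\subset\cO(H)$, so its extension along $\cO(H)\hookrightarrow\cO(H_{\perf})$ is a well-defined ideal in an ordinary commutative ring whose quotient is the displayed $k$-algebra; this is what legitimizes computing $\Ker(H_{\perf}\to H)$ by the naive tensor product $\cO(H_{\perf})\otimes_{\cO(H)}k$.
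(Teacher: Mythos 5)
Your proposal is correct and follows essentially the same route as the paper: the paper's proof likewise consists of invoking $H\cong\Spf k[[x_1,\ldots,x_n]]$ together with Remark~\ref{r:Tate module via Fr}, leaving implicit exactly the colimit computation of the fiber over the origin that you spell out. One small caveat: $H_{\perf}$ itself is still a formal scheme (its points must have nilpotent coordinates), not $\Spec$ of $\bigcup_m k[[x_1^{1/p^m}],\ldots,x_n^{1/p^m}]]$ taken as an abstract ring, but this does not affect your conclusion, since after imposing $x_1=\cdots=x_n=0$ the nilpotence is automatic and the fiber is the honest affine scheme you compute.
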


\begin{proof}
Let $H$ be a connected $p$-divisible $k$-group. As an ind-scheme, $H$ is isomorphic to 
$\Spf k[[x_1,\ldots , x_n]]$ for some $n$. So the proposition follows by Remark~\ref{r:Tate module via Fr}.
\end{proof}

\subsection{The Dieudonn\'e functor according to Scholze-Weinstein}   \label{ss:according to SW}
Let $D_k$ be the Dieudonn\'e ring of $k$ (i.e., the ring generated by $W(k)$ and elements $F,V$ subject to the usual relations).  Let $W(k)[F]\subset D_k$ be the subring generated by $W(k)$ and $F$. 

Let $\fD_k$ be the category of those $D_k$-modules that are free and finitely generated over~$W(k)$. Classical Dieudonn\'e theory \cite[Ch.~III]{Dem} provides an equivalence 
 \[
 \fD_k^{\op}\iso\Tate_k , \quad N\mapsto G_N\, .
 \]
 
 \begin{defin}   \label{d:Good}
$\Good_k$ is the category of $k$-algebras isomorphic to $$\Spec B[x_1^{p^{-\infty}},\ldots ,x_n^{p^{-\infty}}]/(x_1,\ldots , x_n)$$ 
for some perfect $k$-algebra $B$.
 \end{defin}

The following theorem is due to P.~Scholze and J.~Weinstein. 

\begin{thm}  \label{t:SW}
There is a canonical isomorphism of functors
\begin{equation}  \label{e:SW}
G_N(C)\iso \Hom_{W(k)[F]}(N,A_{\cris} (C)), \quad C\in \Good_k\, , N\in\fD_k \,.
\end{equation}
\end{thm}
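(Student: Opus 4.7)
The strategy is to compare the Scholze-Weinstein description $\Hom_{W(k)[F]}(N, A_{\cris}(C))$ with the classical Dieudonn\'e-Fontaine description, using the $W(C^\flat)[F,V]$-module $M(C)$ from \S\ref{ss:M(C)} as a bridge. First I would reduce to the case where $G_N$ is connected: by Remark~\ref{r:connected times etale} every Tate $k$-group splits as a product of a connected part and a reduced (pro-\'etale) part; the pro-\'etale direction is insensitive to nilpotents in $C$ and reduces quickly to classical Dieudonn\'e theory over $C_{\perf}$, where $A_{\cris}(C_{\perf}) = W(C_{\perf})$.

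For connected $G_N$, the heart of the argument is a classical-style identification
$$G_N(C) \iso \Hom_{D_k}(N, M(C)),$$
modeled on Fontaine~\cite{F77}. This is plausible because the canonical expansion of Proposition~\ref{p:M(C)}(ii), $\sum_{n=-\infty}^\infty V^n[x_n]$, is precisely the shape of a Cartier-Dieudonn\'e Witt covector, so a $D_k$-linear map from $N$ to $M(C)$ sends a $W(k)$-basis of $N$ to such series whose components encode the coordinates of a point of $G_N(C)$; connectedness enters via the convergence condition on the expansions as $n\to-\infty$. I expect this to be the content of \S\ref{ss:N pronilpotent} (handling pronilpotent $N$) and \S\ref{ss:conceptually better} (the general case, using $\overline{M}(C)$ in place of $M(C)$). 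Post-composing with the $W(C^\flat)[F]$-linear map $f:M(C)\to A_{\cris}(C)$ of \S\ref{sss:M(C) to Acris(C)} yields the desired natural map
$$\Phi : G_N(C) \iso \Hom_{D_k}(N, M(C)) \longrightarrow \Hom_{W(k)[F]}(N, A_{\cris}(C)).$$

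Bijectivity of $\Phi$ for $C\in\Good_k$ is then powered by Proposition~\ref{p:M(C)= A''_cris(C)}, which identifies $f$ with the inclusion $A''_{\cris}(C)\hookrightarrow A_{\cris}(C)$. Given a $W(k)[F]$-linear $\phi:N\to A_{\cris}(C)$, the identity $p^n=F^n V^n$ in $D_k$ yields $p^n\phi(x)=F^n\phi(V^n x)$ for all $x\in N$, placing $\phi(x)$ in $A'_{\cris}(C)$. One then shows that $\phi$ in fact factors through $A''_{\cris}(C)=f(M(C))$, and hence lifts uniquely to $\tilde\phi:N\to M(C)$. The forcing of $V$-linearity of $\tilde\phi$ from the $F$-linearity of $\phi$ uses \eqref{e:F'V}: the $V$-operator on $M(C)$ is recoverable as a section of the partial inverse $F'$ of $F$, so an $F$-linear lift into $M(C)$ is automatically $V$-equivariant.

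The main obstacle I foresee is establishing the first identification $G_N(C)\iso \Hom_{D_k}(N, M(C))$ in the semiperfect setting for arbitrary $N$: this requires careful bookkeeping with Witt-covector expansions and a correct separation of the connected and \'etale directions. Once that is in place, the passage to $A_{\cris}(C)$ via $f$ is mechanical, modulo the subtle point of showing that $\phi$ lands in $A''_{\cris}(C)$ rather than merely in $A'_{\cris}(C)$, which I expect to invoke the specific Dieudonn\'e structure of $N$ (coming from a $p$-divisible group) rather than abstract $W(k)[F]$-module features alone.
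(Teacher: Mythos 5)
Your overall strategy (use a Fontaine-style module as a bridge to $A_{\cris}$) is the same as the paper's second proof in \S\ref{ss:conceptually better}, but your execution has a genuine gap, and the decomposition you use is not the one the paper uses (nor one that rescues your argument).

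First, a structural mismatch. You split $\Tate_k$ into a connected part and a reduced (pro-\'etale) part. The paper instead splits $\fD_k=\fD'_k\oplus\fD''_k$ according to whether $V$ is topologically nilpotent or invertible. These are different cuts: the ``connected'' piece of a Tate group (equivalently, $F$ topologically nilpotent on $N$) contains both the unipotent summand ($V$ topologically nilpotent) and the multiplicative summand ($V$ invertible), while the paper's $\fD'_k$ contains the unipotent summand and the \'etale summand ($F$ invertible). So your ``connected'' bucket contains exactly the part of the theory --- the multiplicative part --- for which the machinery you deploy fails.

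The gap is in the core claim that, for connected $G_N$, a $W(k)[F]$-linear map $\phi:N\to A_{\cris}(C)$ automatically lands in $A''_{\cris}(C)$ and hence lifts through $f:M(C)\iso A''_{\cris}(C)$. The identity $p^n\phi(x)=F^n\phi(V^n x)$ only puts $\phi(x)$ in $A'_{\cris}(C)$; forcing it into $A''_{\cris}(C)$ requires that $\phi(V^n x)\to 0$, which holds precisely when $V$ is topologically nilpotent on $N$ (this is Remark~\ref{r:Acris overkill}). For multiplicative $N$ (say the Dieudonn\'e module of $\mu_{p^\infty}$, where $V$ is invertible), the relevant elements of $A_{\cris}(C)$ are things like $\log[c]$ with $c\in\Ker((C^\flat)^\times\to C^\times)$, whose image under $\bar f^{-1}$ is $\sum_{n\in\BZ}V^n[c]\in\overline{M}(C)$; since $c$ does not go to $0$, this lies in $\overline{M}(C)\setminus M(C)$, equivalently in $A'_{\cris}(C)\setminus A''_{\cris}(C)$. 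Likewise the identification $G_N(C)\iso\Hom_{D_k}(N,M(C))$ is false for multiplicative $N$; Fontaine's formula \eqref{e:Fontaine's description} uses $\overline{M}(C)$, not the smaller submodule $M(C)$. You mention $\overline{M}(C)$ in passing, but the bijectivity argument you actually run reverts to $M(C)$, $A''_{\cris}(C)$, and Proposition~\ref{p:M(C)= A''_cris(C)}, which is exactly the ``baby'' version (\S\ref{ss:N pronilpotent}) that the paper explicitly limits to $N\in\fD'_k$.

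What would repair the proposal: either (a) follow \S\ref{ss:conceptually better} consistently --- use Fontaine's $G_N(C)=\Hom_{D_k}(N,\overline{M}(C))$, use the $\BZ_p$-flatness of $\overline{M}(C)$ (Lemma~\ref{l:overline M(C)}(ii)) to trade $D_k$-linearity for $W(k)[F]$-linearity, and invoke Proposition~\ref{p:bar f is an iso} ($\overline{M}(C)\iso A'_{\cris}(C)$), together with the $A''\to A'$ correction of Remark~\ref{r:Acris overkill}; or (b) follow the paper's first route and split along $\fD'_k\oplus\fD''_k$, handling the multiplicative summand by the logarithm isomorphism $H(C)\iso A_{\cris}(C)^{F=p}$ of Proposition~\ref{p:SW on log}. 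Your separate treatment of the \'etale part via $C_{\red}$ is an extra complication the paper avoids: étale $N$ already sits inside $\fD'_k$, and the last part of the proof of Proposition~\ref{p:N pronilpotent} shows $\Hom_{D_k}(N,M(C))=\Hom_{D_k}(N,W(C))$ for any semiperfect $C$, which covers it uniformly. Finally, your remark that $V$-linearity of the lift ``uses \eqref{e:F'V}'' is not quite how the paper argues; the paper derives it from $\BZ_p$-flatness of $M(C)$ (resp. $\overline{M}(C)$), via Proposition~\ref{p:M(C)}(iii) (resp. Lemma~\ref{l:overline M(C)}(ii)).
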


\begin{rem}   \label{r:Good suffices}
By Proposition~\ref{p:Tate as schemes} and Remark~\ref{r:connected times etale}, the underlying scheme of any connected Tate $k$-group is isomorphic to $\Spec C$, where $C\in\Good_k\,$. So $G_N$ is completely determined by the functor $C\mapsto G_N(C)$, $C\in\Good_k\,$.
\end{rem}

\begin{rem}  \label{r:Acris overkill}
In  \S\ref{sss:A''} we defined $W(C^\flat )[F]$-submodules $A''_{\cris} (C)\subset A'_{\cris} (C)\subset A_{\cris} (C))$.
Using the map $V:N\to N$, one sees that 
$$\Hom_{W(k)[F]}(N,A_{\cris} (C))=\Hom_{W(k)[F]}(N,A'_{\cris} (C));$$ 
moreover, if $V:N\to N$ is topologically nilpotent then
$$\Hom_{W(k)[F]}(N,A_{\cris} (C))=\Hom_{W(k)[F]}(N,A''_{\cris} (C)).$$ 
Note that $A'_{\cris} (C)$ is much smaller than $A_{\cris} (C)$: compare the denominators in \eqref{e:explicit Acris} and \eqref{e:explicit A'cris}.
\end{rem}

Theorem \ref{t:SW} is a very special case of the theory\footnote{The theory from \cite[\S 4]{SW} was refined in \cite{Lau}.} developed in 
\cite[\S 4]{SW}. More precisely, it is deduced from \cite[Cor.~4.1.12]{SW} as follows. Let $H$ be the $p$-divisble group corresponding to $G_N$; then
$G_N(C)=\Hom (\BQ_p/\BZ_p)_C,H_C)$, where $(\BQ_p/\BZ_p)_C$ and $H_C$ are the constant $p$-divisble groups over $\Spec C$ with fibers $\BQ_p/\BZ_p$ and $H$. To get \eqref{e:SW}, it suffices to compute $\Hom (\BQ_p/\BZ_p)_C,H_C)$ using \cite[Cor.~4.1.12]{SW}.

In what follows we give a \emph{self-contained} construction of \eqref{e:SW}.

Recall that $\fD_k=\fD'_k\oplus \fD''_k$, where $\fD'_k $ (resp.~$\fD''_k$) is the full subcategory of objects $N\in\fD_k$ such that the operator $V:N\to N$ is topologically nilpotent (resp.~invertible). In \S\ref{ss:N pronilpotent} and \S\ref{ss:multiplicative Tate} we will construct the isomorphism \eqref{e:SW} for $N\in\fD'_k$ and $N\in\fD''_k$, respectively. Let us note that in \S\ref{ss:multiplicative Tate} we just paraphrase the relevant part of \cite[\S 4]{SW}. In \S\ref{ss:conceptually better} we sketch a conceptually better proof of Theorem~\ref{t:SW}, which treats $\fD'_k$ and $\fD''_k$ simultaneously.

\begin{rem}
The category $\Good_k$ from Definition~\ref{d:Good} is contained in the following category $\Good'_k$ introduced in  \cite[\S 4]{SW}:  a $k$-algebra $C$ is in $\Good'_k$ if 
$C$ can be represented as a quotient of a perfect ring by an ideal generated by a regular sequence.
In \cite[\S 4]{SW} the isomorphism \eqref{e:SW} is established for $C\in\Good'_k$. I do not know if the construction of \eqref{e:SW} given below works for $C\in\Good'_k$
(I did not check whether Propositions~\ref{p:M(C)= A''_cris(C)} and \ref{p:bar f is an iso} hold in the more general setting).
\end{rem}

\subsection{The isomorphism~\eqref{e:SW} for $N\in\fD'_k$}  \label{ss:N pronilpotent}
If $N\in\fD'_k$ then $G_N:=\HHom_{D_k}(N,W)$, where $W$ is the Witt group scheme. This means that 
\[
G_N(C)=\Hom_{D_k}(N,W(C))
\]
for $N\in\fD'_k$ and any $k$-algebra $C$.

\begin{prop}   \label{p:N pronilpotent}
Let $N\in\fD'_k$ and $C\in \Good_k\,$. Then the canonical epimorphism $A_{\cris} (C)\to W(C)$ (see \S\ref{sss:beta}) induces an isomorphism 
\[
\Hom_{W(k)[F]}(N,A_{\cris}(C))\iso\Hom_{D_k}(N,W(C))\subset\Hom_{W(k)[F]}(N,W(C)).
\]
\end{prop}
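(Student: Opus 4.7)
The plan is first to replace $A_{\cris}(C)$ by the module $M(C)$. By Remark~\ref{r:Acris overkill}, since $V$ is topologically nilpotent on $N\in\fD'_k$, one has $\Hom_{W(k)[F]}(N,A_{\cris}(C)) = \Hom_{W(k)[F]}(N,A''_{\cris}(C))$. Proposition~\ref{p:M(C)= A''_cris(C)} (using $C\in\Good_k$) identifies $A''_{\cris}(C)\cong M(C)$ as $W(C^\flat)[F]$-modules, and Proposition~\ref{p:M(C) to Acris(C)}(ii) identifies the composition with $\beta$ as the canonical projection $\pi:M(C)\twoheadrightarrow W(C)$. Hence the statement reduces to the claim that $\pi$ induces a bijection $\Hom_{W(k)[F]}(N,M(C))\iso\Hom_{D_k}(N,W(C))$.

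I would next observe that every $W(k)[F]$-linear $\psi:N\to M(C)$ is automatically $V$-equivariant, and hence $D_k$-linear. The key point is that $F$ is injective on $M(C)$: by the unique expansion of Proposition~\ref{p:M(C)}(ii) together with the formula $F(\sum V^n[x_n]) = \sum V^n[x_n^p]$, injectivity follows from the reducedness of the perfect ring $C^\flat$. Combined with $F$-linearity of $\psi$ and $FV=p=VF$, the identity $F(\psi(Vx)-V\psi(x)) = \psi(px)-p\psi(x) = 0$ yields $\psi(Vx)=V\psi(x)$. So the task becomes showing that $\pi_*:\Hom_{D_k}(N,M(C))\to\Hom_{D_k}(N,W(C))$ is bijective.

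For injectivity of $\pi_*$, set $K:=\Ker\pi$. From Proposition~\ref{p:M(C)}(ii) and the explicit form of $\pi$, $K$ consists of those $\sum_n V^n[x_n]$ with all $x_n\in\Ker\nu_0$; it then follows (by directly checking the formal shift) that $V$ is bijective on $K$. For $D_k$-linear $\phi:N\to K$ and $x\in N$, topological nilpotency of $V$ on $N$ provides, given any $m\geq 0$, some $n\geq m$ with $V^n x = p^m y$ for $y\in N$; using bijectivity of $V$ on $K$ together with $V^{-n}p^m = V^{m-n}F^m = p^m V^{-n}$ (valid on $K$), one obtains $\phi(x) = V^{-n}\phi(V^n x) = p^m V^{-n}\phi(y)\in p^m K$. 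Proposition~\ref{p:M(C)}(v) combined with $\bigcap_r\Ker\nu_r = 0$ in $C^\flat$ shows that $M(C)$, and hence $K$, is $p$-adically separated, so $\phi(x) = 0$.

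The main obstacle will be surjectivity of $\pi_*$: given $D_k$-linear $\phi:N\to W(C)$, construct a $W(k)[F]$-linear lift $\tilde\phi:N\to M(C)$. My approach would be to attempt the lift inside $W(C^\flat)\subset M(C)$, exploiting that $F$ is bijective on $W(C^\flat)$ and that $\nu_0:C^\flat\twoheadrightarrow C$ induces a surjection $W(C^\flat)\twoheadrightarrow W(C)$. Starting from a $W(k)$-linear lift $\tilde\phi_0$ chosen on a $W(k)$-basis of $N$, the resulting $F$-cocycle $x\mapsto\tilde\phi_0(Fx) - F\tilde\phi_0(x)$ takes values in $K$; one would try to cancel it by a correction, using bijectivity of $V$ on $K$ and topological nilpotency of $V$ on $N$ in the spirit of the injectivity argument, with convergence ensured by $p$-adic completeness of $M(C)$. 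The delicate point is the coherent simultaneous resolution of all $F$-compatibility constraints, and this is where the explicit structure of $C\in\Good_k$ (the description of $C^\flat$ in Section~\ref{sss:2thekeyexample} and Proposition~\ref{p:M(C)= A''_cris(C)}) is expected to play a crucial role.
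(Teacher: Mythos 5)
Your reductions to $M(C)$ and to $D_k$-linear maps track the paper's argument closely (the paper derives $F$-injectivity on $M(C)$ from $p$-torsion-freeness via $VF=p$, you derive it from perfectness of $C^\flat$ via the coefficientwise formula $F\bigl(\sum V^n[x_n]\bigr)=\sum V^n[x_n^p]$; these are equivalent). Your injectivity argument for $\pi_*$ is also correct and complete: $K=\Ker\pi$ consists of expansions with all $x_n\in\Ker\nu_0$, $V$ is indeed bijective on $K$, and the combination of $V$-bijectivity on $K$, topological nilpotence of $V$ on $N$, and $p$-adic separatedness of $M(C)$ (Proposition~\ref{p:M(C)}(i),(v)) forces $\Hom_{D_k}(N,K)=0$.

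The genuine gap is surjectivity of $\pi_*$, which you yourself flag as the main obstacle and leave as a sketch. Two specific problems with the sketch. First, attempting the lift inside $W(C^\flat)\subset M(C)$ is the wrong starting point: the preimage of a general $\alpha\in\Hom_{D_k}(N,W(C))$ genuinely involves negative $V$-powers and does not lie in $W(C^\flat)$, so any such strategy must, from the outset, work in all of $M(C)=W(C^\flat)+K$. Second, and more seriously, the proposed iterative cancellation of the $F$-cocycle $x\mapsto\tilde\phi_0(Fx)-F\tilde\phi_0(x)$ is never shown to converge or even to be solvable at each step; the ``coherent simultaneous resolution'' you allude to is precisely the content that is missing, and it is not clear that a naive successive-approximation scheme closes up.

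The paper avoids all of this by writing down the preimage in closed form. Given $\alpha\in\Hom_{D_k}(N,W(C))$, for each $n$ choose $m$ large enough that $V^mN\subset F^nN$ (possible by topological nilpotence of $V$ together with $p^n=F^nV^n$), and set
\[
\tilde\alpha_n(x):=V^{-m}F^n\alpha\bigl(F^{-n}V^m x\bigr)\in W\bigl(C^\flat/\Ker\nu_n\bigr)[V^{-1}].
\]
One checks this is independent of $m$, that the $\tilde\alpha_n$ are compatible as $n$ varies, and that the resulting map $N\to\varprojlim_n W(C^\flat/\Ker\nu_n)[V^{-1}]$ lands in $M(C)$ and is the unique lift of $\alpha$. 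This settles existence and uniqueness simultaneously and, as the paper notes, requires neither $C\in\Good_k$ nor the identification $M(C)\cong A''_{\cris}(C)$ (those are used only for the earlier reduction). So rather than your injectivity/surjectivity split with an obstruction-theoretic construction, the efficient route is the explicit formula above.
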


\begin{proof}
In \S\ref{sss:A''} we defined a $W(C^\flat )[F]$-submodule $A''_{\cris}(C))\subset A_{\cris}(C)$.
We have $$\Hom_{W(k)[F]}(N,A_{\cris} (C))=\Hom_{W(k)[F]}(N,A''_{\cris} (C))$$  by Remark~\ref{r:Acris overkill} and the assumption $N\in\fD'_k\,$.

In \S\ref{sss:M(C)} and  \S\ref{sss:M(C) to Acris(C)} we defined a $W(C^\flat )[F,V]$-module $M(C)$ and an
$W(C^\flat )[F]$-morphism $f:M(C)\to A_{\cris}(C)$. By Proposition~\ref{p:M(C)= A''_cris(C)} and the assumption $C\in\Good_k$, this morphism induces an isomorphism $M(C)\iso A''_{\cris}(C)$. So
$$\Hom_{W(k)[F]}(N,A_{\cris}(C))=\Hom_{W(k)[F]}(N,M(C)).$$
By Proposition~\ref{p:M(C)}(iii), $M(C)$ is $\BZ_p$-flat, so
$$\Hom_{W(k)[F]}(N,A_{\cris}(C))=\Hom_{D_k}(N,M(C)).$$

It remains to show that the map 
\begin{equation} \label{e:bijection to check}
\Hom_{D_k}(N,M(C))\to\Hom_{D_k}(N,W(C))
\end{equation}
induced by the canonical epimorphism $M(C)\epi W(C)$ is bijective.
In fact, we will prove this without assuming that $C\in \Good_k$. 

Let us describe the preimage of $\alpha\in\Hom_{D_k}(N,W(C))$ under the map \eqref{e:bijection to check}.
First note that for each $m,n\ge 0$ we have a map
\[
V^{-m}F^n:W(C)\to W(C^\flat /\Ker\nu_n)[V^{-1}],
\]
where $\nu_n: C^\flat\epi C$ is as in \S\ref{ss:Fontaine's def}. If $m$ is so big that $V^mN\subset F^nN$ then we get a map
\begin{equation}  \label{e: the map modulo Ker nu_n}
N\to W(C^\flat /\Ker\nu_n)[V^{-1}], \quad x\mapsto V^{-m}F^n \alpha (F^{-n}V^m x).
\end{equation}
It is easy to check that this map does not depend on $m$; moreover, the maps \eqref{e: the map modulo Ker nu_n} corresponding to different $n$ agree with each other and therefore define a map 
$$N\to M(C)\subset\underset{n}{\underset{\longleftarrow}\lim} W(C^\flat /\Ker\nu_n)[V^{-1}].$$ 
It is easy to check that this is the unique preimage of $\alpha$ under the map  \eqref{e:bijection to check}.
\end{proof}

\subsection{The isomorphism~\eqref{e:SW} for $N\in\fD''_k$}      \label{ss:multiplicative Tate}
In this subsection we follow \cite[\S 4]{SW}.

Let $H\in\Tate_k$ be the projective limit of the group schemes $\mu_{p^m}$, $m\in\BN$; then
\[
H(C)=\Ker((C^\flat )^\times\to C^\times )
\] 
for any $k$-algebra $C$. 

Fix an algebraic closure $\bar k$. For  $N\in\fD''_k$ one has
\begin{equation}   \label{e:G_N for mult}
G_N(C):=\Hom_{\Gal (\bar k/k)}(N_0, H(C\otimes_k\bar k)),
\end{equation}
where $N_0$ is the following $\BZ_p$-module with an action of $\Gal (\bar k/k)$:
\[
N_0:=(N\otimes_{W(k)}W(\bar k))^{V=1}:=\Ker (N\otimes_{W(k)}W(\bar k)\overset{V-1}\longrightarrow N\otimes_{W(k)}W(\bar k)).
\]

\begin{prop}  \label{p:SW on log}
Let $C\in \Good_k$. Then the map
\[
H(C)=\Ker((C^\flat )^\times\to C^\times )\to A_{\cris}(C), \quad c\mapsto \log [c]:=-\sum_{n=1}^\infty (n-1)!\cdot \gamma_n(1-[c])
\]
induces an isomorphism 
\begin{equation}    \label{e:SW on log}
H(C)\iso A_{\cris}(C)^{F=p}:=\Ker (A_{\cris}(C)\overset{F-p}\longrightarrow A_{\cris}(C)).
\end{equation}
\end{prop}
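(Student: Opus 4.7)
The plan is to verify that $c\mapsto\log[c]$ is a well-defined group homomorphism from $H(C)$ (multiplicative) to $A_{\cris}(C)^{F=p}$ (additive), and then construct its inverse via the exponential map, using the explicit description of $A_{\cris}(C)$ from \S\ref{sss:3thekeyexample} to handle bijectivity.

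First I would verify well-definedness. Since $\nu_0(c)=1$ for $c\in H(C)$, the element $1-[c]$ lies in the PD ideal $I_{\cris}(C)=\Ker(A_{\cris}(C)\epi C)$, so each $\gamma_n(1-[c])$ makes sense. Convergence of $\sum_n(n-1)!\,\gamma_n(1-[c])$ in the $p$-adic topology is the standard calculation: from $n!\cdot\gamma_n(x)=x^n$ one can write $(n-1)!\,\gamma_n(x)=x^n/n$, and Legendre's formula for $v_p(n!)$ implies that the $p$-adic order of these terms grows with $n$. Multiplicativity in $c$ follows by expanding $1-[c_1c_2]=(1-[c_1])+[c_1]\cdot(1-[c_2])$, applying the PD identity $\gamma_n(x+y)=\sum_{i+j=n}\gamma_i(x)\gamma_j(y)$, and matching the formal series identity $\log((1-X)(1-Y))=\log(1-X)+\log(1-Y)$. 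Finally, $F[c]=[c^p]=[c]^p$ together with continuity of $F$ and additivity of log give $F(\log[c])=\log(F[c])=\log([c]^p)=p\cdot\log[c]$, so the image lies in $A_{\cris}(C)^{F=p}$.

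For bijectivity, the inverse should be the exponential map $\exp(u)=\sum_n\gamma_n(u)$, with the formal identities $\log\circ\exp=\id$ and $\exp\circ\log=\id$ yielding mutual inversion wherever both series converge. If $u\in A_{\cris}(C)^{F=p}$, then the equation $F(\exp u)=\exp(Fu)=\exp(pu)=(\exp u)^p$ is exactly the Teichm\"uller characterization, so one expects $\exp(u)=[c]$ for some $c\in C^\flat$. For $C\in\Good_k$, the explicit description $u=\sum_\alpha a_\alpha\cdot x^\alpha/(\alpha!)_p$ from \S\ref{sss:3thekeyexample} turns $Fu=pu$ into the recursion $p\cdot a_\beta=\sigma(a_{\beta/p})\cdot p^{\sum_i\lfloor\beta_i/p\rfloor}$ on the Witt-vector coefficients; this can be analyzed coefficient-by-coefficient and matched with the explicit structure of $H(C)\subset(C^\flat)^\times$, paralleling the computation in the proof of Proposition~\ref{p:M(C)= A''_cris(C)}.

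The main obstacle will be surjectivity, which splits into two interrelated issues: (a) establishing convergence of $\exp(u)$ from only the datum $Fu=pu$ rather than a strong a priori PD-depth condition on $u$; and (b) identifying $\exp(u)$ with an actual Teichm\"uller lift coming from $C^\flat$, rather than just with an abstract element of $A_{\cris}(C)$ satisfying $F(a)=a^p$. Both points rely essentially on the $\Good_k$ hypothesis: the restrictive form of the coefficient recursion above forces the $a_\beta$ to shrink $p$-adically fast enough for $\exp$ to converge and simultaneously packages them into a Teichm\"uller series, giving a unique $c\in C^\flat$ with $\nu_0(c)=1$, i.e., $c\in H(C)$. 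As in \S\ref{sss:3thekeyexample} this reduces the whole statement to the key example, where it can be verified by direct computation.
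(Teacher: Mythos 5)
Your overall orientation is right — use the explicit description of $A_{\cris}(C)$ from \S\ref{sss:3thekeyexample}, exploit the eigenvalue equation $Fu=pu$ as a recursion on coefficients, and reduce to identifying a Teichm\"uller lift — but there are two genuine gaps relative to what the argument actually requires.

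\textbf{(1) The naive exponential does not converge, and the paper replaces it by the Artin--Hasse exponential.} You propose $\exp(u)=\sum_n\gamma_n(u)$ as the inverse, and flag convergence as the main obstacle; that obstacle is real and you do not resolve it. For $u$ in the PD ideal of $A_{\cris}(C)$, $\gamma_n(u)$ does \emph{not} tend to $0$ $p$-adically in general (already $\gamma_n(x_1)=\frac{1}{n!}\, x_1^n=\frac{(n!)_p}{n!}\cdot\frac{x_1^n}{(n!)_p}$ has unit coefficient), so $\exp$ is not an available map on $A_{\cris}(C)^{F=p}$, and the assertion that $Fu=pu$ forces enough $p$-divisibility for $\exp(u)$ to converge is never justified. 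The paper avoids $\exp$ of a PD element entirely: it uses the Artin--Hasse exponential $E_p(y)\in\bZ_p[[y]]$ applied to a (topologically) nilpotent $c\in\Ker(C^\flat\epi C)$, which makes unconditional sense. The key computational input you are missing is the identity \eqref{e:E_p} (equivalently \eqref{e:2E_p}), which expresses $\log[E_p(c)]$ exactly as the $F=p$ eigenvector series $\sum_m p^m[c^{p^{-m}}]+\sum_l (p^l-1)!\,\gamma_{p^l}([c])$. With the recursion you wrote down, $u$ decomposes as a convergent $W(B)$-linear combination indexed by $\alpha\in S=[0,p)^n\setminus[0,1)^n$, and it is precisely \eqref{e:E_p} that lets you synthesize the preimage $c=\prod_{\alpha\in S}f^{a_\alpha}(x^\alpha)$ of formula \eqref{e:explicit c}. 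Without that identity (or an equivalent substitute) you cannot pass from the coefficient recursion to an actual element of $\Ker((C^\flat)^\times\to C^\times)$.

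\textbf{(2) Injectivity is not addressed.} You treat bijectivity as "mutual inversion of $\log$ and $\exp$ wherever both converge," but since $\exp$ is not defined on $A_{\cris}(C)^{F=p}$, this is circular as an injectivity argument. The paper's injectivity proof is separate and specific: if $\log[c]=0$, then $[c]^{p^2}\in 1+p^2A_{\cris}(C)$ has $\log=0$, and the standard unit $x^{-1}\log(1+p^2x)\in\bZ_p[[x]]^\times$ shows $[c]^{p^2}=1$ in $A_{\cris}(C)$; then one uses that $W(C^\flat)\hookrightarrow A_{\cris}(C)$ for $C\in\Good_k$ to conclude $c^{p^2}=1$ in $C^\flat$, hence $c=1$ by perfectness. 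That injectivity-of-$W(C^\flat)\to A_{\cris}(C)$ step is another place where the $\Good_k$ hypothesis is used essentially and which your sketch does not isolate.

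In short: the recursion on coefficients is the right starting point for surjectivity and does match the paper's use of \S\ref{sss:3thekeyexample}, but the substitution of the PD exponential $\exp$ for the Artin--Hasse $E_p$ is not a cosmetic change — it is where the proof would break — and formula \eqref{e:E_p} is the missing lemma that makes surjectivity go through. The injectivity argument must be supplied independently.
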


In \S\ref{sss:SW on log} we will give a proof of the proposition following  \cite[\S 4]{SW} (where a more general statement is proved). Using Proposition~\ref{p:SW on log}, one gets the isomorphism~\eqref{e:SW} as follows.
Combining \eqref{e:G_N for mult} and \eqref{e:SW on log}, one gets a canonical isomorphism
\begin{equation}    \label{e:2SW on log}
G_N(C)=\Hom_{\Gal (\bar k/k)}(N_0, A_{\cris}(C\otimes_k\bar k)^{F=p}).
\end{equation}
We also have the canonical isomorphisms 
\begin{equation}    \label{e:3SW on log}
N_0\otimes_{\BZ_p}W(\bar k)\iso N\otimes_{W(k)}W(\bar k), \quad A_{\cris}(C)\hat\otimes_{W(k)}W(\bar k )\iso A_{\cris}(C\otimes_k\bar k)
\end{equation}
(the second one by Proposition~\ref{p:flat perfect base change}). Combining \eqref{e:2SW on log}, and \eqref{e:3SW on log},
we see that an element of $G_N(C)$ is the same as a $\Gal (\bar k/k)$-equivariant $D_{\bar k}$-homomorphism
$N\otimes_{W(k)}W(\bar k)\to A_{\cris}(C)\hat\otimes_{W(k)}W(\bar k )$, which is the same as a $D_k$-homomorphism $N\to A_{\cris}(C)$.

\subsubsection{Proof of Proposition~\ref{p:SW on log}}  \label{sss:SW on log}
If $c\in\Ker((C^\flat )^\times\to C^\times )$ then $$F(\log [c])=\log F([c])=\log ([c]^p)=p\log [c].$$

Suppose that $c\in\Ker((C^\flat )^\times\to C^\times )$ and $\log [c]=0$. Note that for $u\in\Ker (W(C^\flat )\to C)$ one has $(1+u)^p-1\in pA_{\cris}(C)$. In particular, $[c]^p\in 1+pA_{\cris}(C)$, so $[c]^{p^2}\in 1+p^2A_{\cris}(C)$. Since 
$\log ([c]^{p^2})=0$ and $[c]^{p^2}\in 1+p^2A_{\cris}(C)$, a standard argument\footnote{One uses that the formal series $x^{-1}\cdot \log (1+p^2x)$ belongs to $(\BZ_p[[x]])^\times$.} shows that $[c]^{p^2}$ equals $1$ in $A_{\cris}(C)$. But our assumption on $C$ implies that the map $W(C^\flat )\to A_{\cris}(C)$ is injective, so $[c]^{p^2}$  equals $1$ in  $W(C^\flat )$.
Therefore $c^{p^2}=1$. By perfectness of $C$, this implies that $c=1$. This proves injectivity of the map \eqref{e:SW on log}. 

The proof of surjectivity from \cite[\S 4.2]{SW} uses the Artin-Hasse exponential
\[
E_p(y):=\exp (y+\frac{y^p}{p}+\frac{y^{p^2}}{p^2}+\ldots )\in\BZ_p[[y]].
\]
In \cite[\S 4.2]{SW} it is proved that
\begin{equation}  \label{e:E_p}
\log [E_p(c)]=\sum_{n=-\infty}^{\infty}\frac{[c^{p^n}]}{p^n}:=\sum_{m=0}^\infty p^m\cdot [c^{p^{-m}}]+\sum_{n=1}^{\infty} (p^n-1)!\cdot\gamma_{p^n}([c]), \quad c\in\Ker (C^\flat\epi C).
\end{equation}
(a way to think about the r.h.s is explained in \S\ref{sss:remark on E_p} below). 
Formula~\eqref{e:E_p} is an immediate consequence of the formula
\begin{equation}  \label{e:2E_p}
[E_p(c)]=E_p([c])\cdot\exp\sum_{m=1}^\infty p^m [c]^{p^{-m}}, \quad c\in\Ker (C^\flat\epi C).
\end{equation}
which is proved in \cite[\S 4.2]{SW} as follows. First, note that the exponent in the r.h.s. of \eqref{e:2E_p} has the form $\exp (py)$, where $y\in W(C^\flat )$ is topologically nilpotent; so the exponent makes sense as an element of $W(C^\flat )$. Let $z\in W(C^\flat )$ be the r.h.s. of \eqref{e:2E_p}, then it is easy to check that $F(z)=z^p$ and the image of $z$ in $C^\flat$ equals $E_p(c)$. These properties of $z$ mean that $z=[E_p(c)]$.

Let us now prove surjectivity. Let $u\in A_{\cris}(C)$ be such that $Fu=pu$; we want to represent $u$ as $\log [c]$, $c\in\Ker ((C^\flat)^\times\to C^\times)$.
Fix an isomorphism $$C\iso B[x_1^{p^{-\infty}},\ldots ,x_n^{p^{-\infty}}]/(x_1,\ldots , x_n),$$ where $B$ is a perfect $k$-algebra. Using the realization of $A_{\cris}(C)$ from \S\ref{sss:3thekeyexample}, write
\begin{equation} \label{e:2explicit Acris}
u=\sum_{\alpha\in\BZ_+[1/p]^n} a_\alpha \frac{x^\alpha}{(\alpha !)_p} , \quad a_\alpha\in W(B), \quad a_\alpha\to 0,
\end{equation}
where $x^\alpha$ really means $[x_1^{\alpha_1}]\cdot\ldots\cdot [x_n^{\alpha_n}]$. 
Since $Fu=pu$, it is clear that $u$
is an infinite $W(B)$-linear combination of elements as in the r.h.s. of \eqref{e:E_p}. So using \eqref{e:E_p}, it is straightforward to find $c\in\Ker ((C^\flat)^\times\to C^\times)$ such that $u=\log [c]$. To formulate the answer, we will use the standard monomorphism
\[
W(B)\mono B[[t]]^\times , \quad a\mapsto f^a;
\]
recall that if $a=\sum\limits_{i=0}^\infty V^i[b_i]$, $b_i\in B$, then $f^a(t):=\prod\limits_{i=0}^\infty E_p(b_it^{p^i})$.
Here is the formula for $c$, which follows from  \eqref{e:E_p}:
\begin{equation} \label{e:explicit c}
c=\prod_{\alpha\in S} f^{a_\alpha}(x^\alpha )\in \Ker((C^\flat )^\times\to C^\times ), \quad\mbox{ where } S:=[0,p)^n\setminus [0,1)^n\subset\BZ_+[1/p]^n .
\end{equation}
The product converges because $a_\alpha\to 0$.

\subsection{Another approach to the proof of Theorem~\ref{t:SW}}   \label{ss:conceptually better}
In \S\ref{ss:N pronilpotent}-\S\ref{ss:multiplicative Tate} we treated $\fD'_k $ and $\fD''_k$ separately, which is not good philosophically;
this is related to the fact that we used the definition of the functor $N\mapsto G_N$ from \cite{Dem}, which has a similar drawback. Here is a sketch of a conceptually better proof of Theorem~\ref{t:SW}, which uses the description of $G_N$ given by  Fontaine \cite{F77}. This description is recalled in \S\ref{sss:Fontaine's description} below.

\subsubsection{Witt covectors and bivectors}
For any discrete $\BF_p$-algebra $R$ we introduced in \S\ref{sss:M(C)} the $W(R^\flat )$-module $W(R)[V^{-1}]$. Each element of $W(R)[V^{-1}]$ has a unique expansion 
\begin{equation}   \label{e:Witt bivector}
\sum_{m=-\infty}^{\infty}V^m[x_m], \quad x_m\in R,
\end{equation}
where $x_m=0$ for sufficiently negative $m$.

In \cite[\S V.1.3]{F77} Fontaine defines the \emph{group of Witt bivectors} $BW (R)$, which  contains $W(R)[V^{-1}]$ as a subgroup. Elements of $BW (R)$ are formal expressions \eqref{e:Witt bivector} such that for some $N< 0$ the ideal generated by $x_N, x_{N-1}, x_{N-2}, \ldots$ is \emph{nilpotent}. The group $BW (R)/W(R)$ is denoted by $CW (R)$ and called the \emph{group of Witt covectors.}

In fact, the definition of  $BW (R)$ from \cite[p.~228]{F77} \emph{relies} on that of $CW (R)$. The latter is given in \cite[Ch.~II.1]{F77} and is based on  Proposition 1.1 and Lemma 1.2 of \cite[Ch.~II]{F77}.

If $R$ is an algebra over a perfect $\BF_p$-algebra $k$ then $BW (R)$ and $CW(R)$ are $W(k)$-modules by an argument similar to Proposition II.2.2 of \cite{F77} (in which there is an extra assumption that $k$ is a field). In particular, $BW (R)$ and $CW(R)$ are $W(R^\flat )$-modules.

\subsubsection{The module $\overline{M} (C)$}   \label{sss:bar M(C)}
Let $C$ be a semiperfect $\BF_p$-algebra. Recall that $C^\flat$ is the projective limit of $C^\flat/\Ker\nu_n $, where $\nu_n$ is as in \S\ref{ss:Fontaine's def}.
We equip $C^\flat$ with the projective limit topology. Define $BW (C^\flat )$ to be the projective limit of $BW (C^\flat/\Ker\nu_n )$. 
Following p.~229 of Fontaine's book \cite{F77}, consider the preimage of 
$W(C)\subset BW(C)=BW(C^\flat/\Ker\nu_0)$ in $BW (C^\flat )$; we denote\footnote{Fontaine's notation for $\overline{M} (C))$ is $BW_0(\kappa (C))$. Here $\kappa (C)$ is our $C^\flat$.} this preimage by $\overline{M} (C)$. Thus elements of $\overline{M} (C)$ are formal expressions
\begin{equation}   \label{e:2Witt bivector}
\sum_{m=-\infty}^{\infty}V^m[x_m], \quad x_m\in C^\flat,\quad x_m\in\Ker(C^\flat \overset{\nu_0}\longrightarrow C) \mbox{ for } m<0
\end{equation}
such that the ideal in $C^\flat$ generated by $x_{-1}$, $x_{-2}$, \ldots is topologically nilpotent\footnote{Here we are using that $x_{-i}$ is topologically nilpotent for each $i>0$.}. The topological nilpotence condition is automatic if $C$ is as in the following lemma.

\begin{lem}   \label{l:nilpotence equivalence}
The following properties of $C$ are equivalent:

(i) the ideal $\Ker (C\overset{\Fr}\longrightarrow C)$ is nilpotent;

(ii) the ideal $\Ker (C\overset{\Fr^n}\longrightarrow C)$ is nilpotent.
\end{lem}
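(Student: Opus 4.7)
My plan is to treat the two implications separately. For (ii) $\Rightarrow$ (i), I would simply observe that the factorization $\Fr^n = \Fr \circ \Fr^{n-1}$ yields the inclusion $\Ker(\Fr) \subset \Ker(\Fr^n)$ (for $n\ge 1$), so any power that annihilates the larger ideal certainly annihilates the smaller one. This direction is a single line.

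The substance lies in (i) $\Rightarrow$ (ii), which I would prove by induction on $n$, refining the statement quantitatively to: if $\Ker(\Fr)^N = 0$, then $\Ker(\Fr^n)^{N^n} = 0$. The base case $n=1$ is the hypothesis. For the inductive step, the key observation is that $c \in \Ker(\Fr^n)$ (i.e.\ $c^{p^n}=0$) implies $c^p \in \Ker(\Fr^{n-1})$, since $(c^p)^{p^{n-1}} = c^{p^n} = 0$. Hence, given any $c_1,\ldots,c_{N^{n-1}} \in \Ker(\Fr^n)$, commutativity lets us write
\[
(c_1\cdots c_{N^{n-1}})^p \;=\; c_1^p\cdots c_{N^{n-1}}^p \;\in\; \Ker(\Fr^{n-1})^{N^{n-1}} \;=\; 0
\]
by the inductive hypothesis; thus $c_1\cdots c_{N^{n-1}} \in \Ker(\Fr)$. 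A further product of $N$ such expressions then lies in $\Ker(\Fr)^N = 0$, bounding the nilpotence index of $\Ker(\Fr^n)$ by $N\cdot N^{n-1} = N^n$.

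The argument presents no real obstacle: it is elementary and entirely formal, semiperfectness of $C$ is not invoked, and all that is required is bookkeeping the nilpotence indices. I would dispatch the lemma in a few lines.
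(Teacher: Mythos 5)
Your proof is correct and is essentially the paper's argument: the paper proves the nontrivial direction by the same elementary counting, showing that a product of $m$ elements of $\Ker(\Fr^n)$ lands in $\Ker(\Fr^{n-1})$ and iterating to get nilpotence index at most $m^n$, exactly your bound $N^n$ organized as an explicit induction. The converse direction is, as you say, immediate from $\Ker(\Fr)\subset\Ker(\Fr^n)$ and is omitted in the paper.
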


\begin{proof}
Suppose that the product of any $m$ elements of $\Ker (C\overset{\Fr}\longrightarrow C)$  is zero. Then the products of any $m$ elements of $\Ker (C\overset{\Fr^n}\longrightarrow C)$  belongs to $\Ker (C\overset{\Fr^{n-1}}\longrightarrow C)$ . So the product of any $m^n$ elements of $\Ker (C\overset{\Fr^n}\longrightarrow C)$  is zero.
\end{proof}

Clearly $\overline{M}(C)$ is a $W(C^\flat )$-module equipped with maps $F,V:\overline{M}(C)\to\overline{M}(C)$ satisfying the usual identities.
Note that $\overline{M} (C)\supset M(C)$, where $M(C)$ is as in \S\ref{sss:M(C)}.

Similarly to Proposition~\ref{p:M(C)}, one proves the following

\begin{lem}   \label{l:overline M(C)}
(i) $\overline{M} (C)$ is $p$-adically complete.

(ii) $\Ker (\overline{M}(C)\overset{p}\longrightarrow \overline{M}(C))=0$.

(iii) If $C$ has the properties from Lemma~\ref{l:nilpotence equivalence} then one has a canonical $W(C^\flat )$-module isomorphism 
$$\overline{M}(C)/p\overline{M}(C)\iso\prod\limits_{i=0}^\infty\gr^i C^\flat,$$ 
where $\gr C^\flat$ corresponds to the filtration $C^\flat\supset\Ker\nu_1\supset\Ker\nu_2\supset\ldots$\;. \qed
\end{lem}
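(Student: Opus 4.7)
The proof plan follows closely that of Proposition~\ref{p:M(C)}, with two key extra ingredients: the care needed for the topological nilpotence condition in the definition of $\overline{M}(C)$, and the use of Lemma~\ref{l:nilpotence equivalence} in part~(iii).

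Using the identity $p\cdot V^m[x] = V^{m+1}[x^p]$ in $W(C^\flat)$ (which comes from $p=VF$ and $F[x]=[x^p]$), and extending $W(C^\flat)$-linearly, multiplication by $p$ on $\overline{M}(C)$ is given by
\[
p\cdot\sum_m V^m[x_m] \;=\; \sum_m V^{m+1}[x_m^p].
\]
Statement~(ii) is then immediate: $py=0$ forces $x_m^p=0$ for all $m$, and injectivity of Frobenius on the perfect ring $C^\flat$ gives $x_m=0$. For~(i), I would characterize $p^r\overline{M}(C)$ as the set of expressions $\sum V^{m'}[x_{m'}]\in\overline{M}(C)$ such that $x_{m'}\in\Ker\nu_r$ for all $m'<r$, since then the $p^r$-th roots $y_m:=x_{m+r}^{p^{-r}}$ (unique in the perfect ring $C^\flat$) satisfy $y_m\in\Ker\nu_0$ for $m<0$. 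The remaining topological nilpotence of $\{y_m\}_{m<0}$ that must be checked in order to conclude $\sum V^m[y_m]\in\overline{M}(C)$ follows from that of $\{x_{m'}\}_{m'<0}$: the shift $\Fr^{-1}$ on $C^\flat$ is a homeomorphism sending $\Ker\nu_n$ to $\Ker\nu_{n-1}$, so topological nilpotence of an ideal is preserved under taking $p^r$-th roots. From this description, $\bigcap_r p^r\overline{M}(C)=0$ because $\bigcap_r\Ker\nu_r=0$, and $p$-adic completeness of $\overline{M}(C)$ reduces to completeness of $C^\flat$ in its projective limit topology.

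For (iii) I would first observe that the nilpotence hypothesis makes the topological nilpotence condition in the definition of $\overline{M}(C)$ automatic. Indeed, if $(\Ker\Fr_C^n)$ is nilpotent for every $n$ (Lemma~\ref{l:nilpotence equivalence}), then for $c^{(1)},\dots,c^{(N)}\in\Ker\nu_0$ the $n$-th component of the product equals $c^{(1)}_n\cdots c^{(N)}_n\in(\Ker\Fr_C^n)^N$, which vanishes once $N$ is sufficiently large. Hence $\Ker\nu_0\subset C^\flat$ is itself topologically nilpotent, and elements of $\overline{M}(C)$ are \emph{arbitrary} formal expressions $\sum_{m\in\BZ}V^m[x_m]$ with $x_m\in\Ker\nu_0$ for $m<0$. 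The characterization of $p\overline{M}(C)$ from~(i) now reads: $z\in p\overline{M}(C)$ iff $x_{m'}\in\Ker\nu_1$ for $m'\le 0$, with no constraint on $x_{m'}$ for $m'\ge 1$ (since by perfectness every element of $C^\flat$ is a $p$-th power). Therefore $\overline{M}(C)/p\overline{M}(C)$ retains one coordinate for each $m\le 0$: the coordinate at $m=0$ lives in $C^\flat/\Ker\nu_1=\gr^0 C^\flat$, while at $m=-r$ for $r\ge 1$ it lives in $\Ker\nu_0/\Ker\nu_1$. The latter is canonically isomorphic to $\gr^r C^\flat=\Ker\nu_r/\Ker\nu_{r+1}$ via $c\mapsto c^{p^r}$ (a bijection in characteristic $p$ since $C^\flat$ is perfect, compatibly with the formula $c\mapsto V^{-r}[c^{p^{-r}}]$ from Proposition~\ref{p:M(C)}(vi)). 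Assembling these identifications gives the claimed isomorphism $\overline{M}(C)/p\overline{M}(C)\iso\prod_{r\ge 0}\gr^r C^\flat$; one gets a \emph{product} rather than a direct sum precisely because no convergence condition is imposed on the coefficient sequence $\{x_m\}_{m<0}$.

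The main obstacle is the clean description of $p^r\overline{M}(C)$ in part~(i): showing both that $x_{m'}\in\Ker\nu_r$ for $m'<r$ suffices and that the resulting $p^r$-th roots still satisfy the topological nilpotence condition. This is exactly where continuity of $\Fr^{-1}$ on $C^\flat$ is used, and it is the only place where the statement is genuinely more delicate than its analogue in Proposition~\ref{p:M(C)}.
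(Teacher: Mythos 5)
Your overall plan is sound and is exactly what the paper has in mind when it says ``similarly to Proposition~\ref{p:M(C)}'': the key step is the correct identification of $p^r\overline{M}(C)$ inside the unique-expansion coordinates, and your characterization $\{x_{m'}\in\Ker\nu_r\text{ for all }m'<r\}$ is right (using $p\cdot\sum_m V^m[x_m]=\sum_m V^{m+1}[x_m^p]$ and $\Fr^r(\Ker\nu_0)=\Ker\nu_r$). Parts (ii) and (iii) then come out correctly, including the observation that under the hypothesis of Lemma~\ref{l:nilpotence equivalence} the topological nilpotence requirement in the definition of $\overline M(C)$ is vacuous, which is what turns the direct sum of Proposition~\ref{p:M(C)}(vi) into a product here.

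There is one small gap in the verification of your characterization of $p^r\overline{M}(C)$, namely at the step where you must show that $w:=\sum_m V^m[y_m]$ with $y_m=x_{m+r}^{p^{-r}}$ actually lies in $\overline{M}(C)$. The family $\{y_m\}_{m<0}$ is $\Fr^{-r}$ applied to $\{x_{m'}\}_{m'<r}$, not merely to $\{x_{m'}\}_{m'<0}$; so besides the topologically nilpotent ideal $(x_{-1},x_{-2},\ldots)$ you must also absorb the finitely many elements $x_0,\ldots,x_{r-1}\in\Ker\nu_r$. The fix is easy but should be said: each element of $\Ker\nu_0$ is topologically nilpotent in $C^\flat$ (this is the fact recorded in the paper's footnote after \eqref{e:2Witt bivector}), a finitely generated ideal with topologically nilpotent generators is topologically nilpotent, and a finite sum of topologically nilpotent ideals is topologically nilpotent. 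With that, $\Fr^{-r}\big((x_{r-1},\ldots,x_0)+(x_{-1},x_{-2},\ldots)\big)$ is topologically nilpotent and $w\in\overline M(C)$. Everything else — injectivity of $p$, $\bigcap_r p^r\overline M(C)=0$, completeness via the agreement of the $p$-adic and projective-limit topologies, and the coordinatewise identification of $\overline M(C)/p\overline M(C)$ with $\prod_i\gr^i C^\flat$ — is as in your write-up.
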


\subsubsection{Fontaine's description of $G_N$}   \label{sss:Fontaine's description}
Let $k$ be a perfect field and $N\in\fD_k$. Proposition~V.1.2 of \cite{F77} tells us 
that for any semiperfect $k$-algebra $C$ one has
\begin{equation}  \label{e:Fontaine's description}
G_N(C)=\Hom_{D_k}(N,\overline{M} (C)).
\end{equation}
Note that by Lemma~\ref{l:overline M(C)}(ii), one has
\begin{equation}    \label{e:2Fontaine's description}
\Hom_{D_k}(N,\overline{M} (C))=\Hom_{W(k)[F]}(N,\overline{M} (C)).
\end{equation}

\subsubsection{The homomorphism $\bar f:\overline{M}(C)\to A'_{\cris}(C)$}   \label{sss:bar f}
The homomorphism $f:M(C)\to A_{\cris}(C)$ from \S\ref{sss:M(C) to Acris(C)} and Proposition~\ref{p:M(C) to Acris(C)} canonically extends to a homomorphism of $W(C^\flat )[F]$-modules $\bar f:\overline{M}(C)\to A_{\cris}(C)$; namely, $\bar f$ takes an element \eqref{e:2Witt bivector} to 
$$\sum\limits_{m=0}^{\infty}V^m[x_m]+\sum\limits_{l=1}^{\infty} (p^l-1)!\cdot\gamma_{p^l}([x_{-l}])\in A_{\cris}(C).$$
Similarly to Lemma~\ref{l:image of f} one shows that $\bar f (\overline{M}(C))\subset A'(C)$, where $A'(C)$ is defined in \S\ref{sss:A''}.

Similarly to Proposition~\ref{p:M(C)= A''_cris(C)}, one deduces from Lemma~\ref{l:overline M(C)} the following

\begin{prop}   \label{p:bar f is an iso}
If $C$ is as in \S\ref{sss:1thekeyexample} then the map $\bar f:\overline{M} (C)\to A'_{\cris}(C)$ is an isomorphism. \qed
\end{prop}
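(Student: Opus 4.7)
The plan is to follow the template of Proposition~\ref{p:M(C)= A''_cris(C)}, with the pair $(M(C), A''_{\cris}(C))$ replaced throughout by $(\overline{M}(C), A'_{\cris}(C))$ and with Proposition~\ref{p:M(C)} replaced by Lemma~\ref{l:overline M(C)}. First, I would observe that both $\overline{M}(C)$ and $A'_{\cris}(C)$ are $\BZ_p$-flat and $p$-adically complete: for $\overline{M}(C)$ this is Lemma~\ref{l:overline M(C)}(i,ii), while for $A'_{\cris}(C)$ it follows from the explicit description as convergent series $\sum_\alpha a_\alpha\, x^\alpha/p^{m(\alpha)}$, $a_\alpha\in W(B)$, $a_\alpha\to 0$ on each subset of bounded $m(\alpha)$, established in the proof of Proposition~\ref{p:M(C)= A''_cris(C)}. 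Consequently, it suffices to show that the reduction $\overline{M}(C)/p\overline{M}(C)\to A'_{\cris}(C)/pA'_{\cris}(C)$ of $\bar f$ is an isomorphism.

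Next, I would describe both sides explicitly mod $p$. On the target side the explicit formula immediately gives
\[
A'_{\cris}(C)/pA'_{\cris}(C) \;\cong\; \prod_{i\ge 0}\bigoplus_{\alpha:\,m(\alpha)=i} B\cdot\bar y_\alpha,
\]
where $\bar y_\alpha$ denotes the class of $x^\alpha/p^{m(\alpha)}$. On the source side, I must first verify that $C$ satisfies the nilpotence hypothesis of Lemma~\ref{l:overline M(C)}(iii): since $C$ has $B$-basis given by monomials $x^\alpha$ with $\alpha\in [0,1)^n\cap\BZ_+[1/p]^n$, the ideal $\Ker(\Fr\colon C\to C)$ is spanned by monomials with some $\alpha_i\ge 1/p$, and the product of any $p$ such monomials lies in $(x_1,\ldots,x_n)=0$. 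Hence Lemma~\ref{l:overline M(C)}(iii) yields $\overline{M}(C)/p\overline{M}(C)\cong\prod_{i\ge 0}\gr^i C^\flat$, and a straightforward computation using \S\ref{sss:2thekeyexample} shows that $\Ker\nu_i\subset C^\flat$ is spanned by monomials with $\max_j\alpha_j\ge p^i$, so that $\gr^i C^\flat\cong\bigoplus_{\alpha:\,m(\alpha)=i} B\cdot x^\alpha$.

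Finally, I would check that the reduction of $\bar f$ induces the obvious bijection of bases. The isomorphism of Lemma~\ref{l:overline M(C)}(iii) represents the class of $x^\alpha\in\gr^i C^\flat$ (for $i>0$) by $V^{-i}[x^{\alpha/p^i}]\in\overline{M}(C)$, and applying $\bar f$ produces $(p^i-1)!\cdot\gamma_{p^i}([x^{\alpha/p^i}])=[x^\alpha]/p^i$, using $(p^i)!=p^i\cdot (p^i-1)!$ and $(p^i)!\cdot\gamma_{p^i}(u)=u^{p^i}$; its class mod $p$ is precisely $\bar y_\alpha$. The case $i=0$ is immediate because $\bar f$ restricted to $W(C^\flat )\subset\overline{M}(C)$ is the canonical inclusion. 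So the reduction of $\bar f$ is a $B$-linear bijection of explicit bases, which proves the proposition. The only real obstacle is the last arithmetic verification, i.e.\ confirming that the powers of $p$ arising from Fontaine's PD structure match the normalization $p^{m(\alpha)}$ appearing in the explicit description of $A'_{\cris}(C)$; this however is the same accounting that is implicit in the proof of Proposition~\ref{p:M(C)= A''_cris(C)}, and no new difficulty is introduced by passing from $M(C)$ to $\overline{M}(C)$.
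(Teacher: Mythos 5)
Your argument is exactly the paper's intended one: the paper proves this proposition only by the remark ``similarly to Proposition~\ref{p:M(C)= A''_cris(C)}, one deduces from Lemma~\ref{l:overline M(C)}\dots'', and your reduction modulo $p$ (using $\BZ_p$-flatness and $p$-adic completeness of both sides) together with the matching of explicit bases, $\bar f\bigl(p^{-i}[x^\alpha]\bigr)=(p^i-1)!\,\gamma_{p^i}([x^{\alpha/p^i}])=[x^\alpha]/p^i\mapsto \bar y_\alpha$ for $m(\alpha)=i$, is precisely that deduction, carried out correctly. One small slip to fix: when verifying the hypothesis of Lemma~\ref{l:overline M(C)}(iii), the product of $p$ monomials each having \emph{some} exponent $\ge 1/p$ need not vanish when $n>1$, since the large exponents may sit in different variables; by pigeonhole you should take $n(p-1)+1$ factors, which still shows $\Ker(\Fr\colon C\to C)$ is nilpotent, so the conclusion is unaffected.
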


\subsubsection{Proof of Theorem~\ref{t:SW}}
By Remark~\ref{r:Acris overkill}, we have $$\Hom_{W(k)[F]}(N,A_{\cris} (C))=\Hom_{W(k)[F]}(N,A'_{\cris} (C)).$$
Combining this with \eqref{e:Fontaine's description}-\eqref{e:2Fontaine's description} and Proposition~\ref{p:bar f is an iso}, we get the desired isomorphism
\[
G_N(C)\iso \Hom_{W(k)[F]}(N,A_{\cris} (C)), \quad C\in \Good_k\, , N\in\fD_k \,.
\]

\subsubsection{A remark on the proof of Proposition~\ref{p:SW on log}}   \label{sss:remark on E_p}
The r.h.s. of formula \eqref{e:E_p} equals $\bar f(w)$, where $w\in\overline{M}(C)^{V=1}$ is defined by $w:=\sum\limits_{n=-\infty}^\infty V^n[c]$ and $\bar f$ is as in \S\ref{sss:bar f}.

\bibliographystyle{alpha}

\end{document}